\documentclass{article}

\usepackage{lastpage}

\usepackage{fancyhdr}

\fancyfoot[C]{Page \thepage\ of \pageref{LastPage}}

\pagestyle{fancy}

\usepackage{graphicx}
\usepackage{mathtools}
\usepackage{enumerate}
\usepackage{amssymb}
\usepackage{mathptmx}     
\usepackage{csquotes}

\usepackage{latexsym}
\newtheorem{theorem}{Theorem}
\newtheorem{proposition}{Proposition}

\newtheorem{definition}{Definition}
\newtheorem{proof}{Proof}
\newtheorem{example}{Example}
\newtheorem{remark}{Remark}

\RequirePackage{fix-cm}

\def\.#1{\hfill#1\kern.5em\vrule\kern-.5em}

\newtheorem{consequence}{Consequence}

\begin{document}

\title{Bolzano and the Part-Whole Principle}

\author{Kate\v{r}ina Trlifajov\'{a}}

%\institute{Czech Technical University in Prague, Thakurova 9, 160 00 Praha 6, Czech Republic }

%\email{katerina.trlifajova@fit.cvut.cz}}

\maketitle

\begin{abstract}
The embracing of actual infinity in mathematics leads naturally to the question of comparing the sizes of infinite collections. The basic dilemma is that the Cantor Principle (CP), according to which two sets have the same size if there is a one-to-one correspondence between their elements, and the Part-Whole Principle (PW), according to which the whole is greater than its part, are inconsistent for infinite collections. Contemporary axiomatic set-theoretic systems, for instance ZFC, are based on CP. PW is not valid for infinite sets. 

In the last two decades the topic of sizes of infinite sets has resurfaced again in a number of papers. A question of whether it is possible to compare the sizes to comply with PW has been risen and researched. 

Bernard Bolzano in his 1848 Paradoxes of the Infinite dealt with principles of introducing infinity into mathematics. He created a theory of infinite quantities that respects PW and which is based on sums of infinite series. We extend Bolzano's theory and create a constructive method for determining the set size of countable sets so that the cardinality of finite sets is preserved and PW is valid. Our concept is close to the numerosity theory from the beginning of this century but it is simpler and more intuitive. In the results, we partly agree with the theory of numerosities. The sizes of countable sets are uniquely determined, but they are not linearly ordered.

\end{abstract}

\section{Prologue}
The core of the problem is usually demonstrated using Galileo's paradox. Its original formulation is, let us try to considerate, somewhat misleading.
Salviati compares the collection of all natural numbers and the collection of all squares, i.e. the second powers of natural numbers. 
\medskip

$$1, \quad 2, \quad 3, \quad \ 4, \quad \ 5, \ \dots$$
$$1, \quad 4, \quad 9, \quad 16, \quad 25,\ \dots $$
Salviati explains to Simplicio that on one hand all numbers, including both squares and non-squares, are more than the squares alone. On the other hand, there are as many squares as the corresponding number of roots, since every square has its own root and every root its own square and all the numbers are roots. 

The question which already partly contains the answer remains unpronounced. 
How is the collection of squares formed? There are two options. Either we select them from numbers and then there is certainly less of them. 

Or we create them from natural numbers. We use Galileo's assumption which seems, but does not have to be, self-evident:

\begin{quote} Every number is a root of some square. (Galilei 1638/1914, p. 40).\end{quote} 

It means that the squares of natural numbers are also natural numbers. Thus we must also take their squares, and so on. We collect all the squares into a whole. This collection has as many elements as the collection of all natural numbers. 

Acceptance of both solutions simultaneously leads to a contradiction called a paradox. So the question is, what is the meaning of the three dots that signify the continuation of an infinite sequence. 

$$\ast \ast \ast $$

The existence of the infinite set of natural numbers is guaranteed by the \emph{axiom of infinity}\footnote{The \emph{axiom of infinity} is a definitive positive answer to the more than two millennia old question about the existence of actual infinity. It says that there is an infinite set. $$(\exists x)(\emptyset \in x \wedge ((\forall y)(y \in x \Rightarrow y \cup \{y\} \in x)$$} in Zermelo-Fraenkel's axiomatic set theory (ZF). The set of squares used to be described 
\footnote{This type of description is used by William Parker, for instance: \bf Even $ = \{2n; n \in \mathbb N\}$, Odd $= \{2n+1; n \in \mathbb N\}$, \rm (Parker 2013, p. 601)}
$$S = \{n^2; n \in \mathbb N\}.$$
This is an inaccurate description of a set whose existence is guaranteed by the \emph{axiom of specification}.\footnote{The \emph{axiom of specification} states the construction of a subset defined by a special property. Formally, let $x$ be a set and $\varphi$ a formula of the language of ZF. Then there is a set $z$
$$z = \{y; y \in x \wedge \varphi(y)\}$$

} We write more precisely
$$S = \{n \in \mathbb N; (\exists m)(m \in \mathbb N \wedge n = m^2\}$$
This is obviously a subset of natural numbers, even a proper subset, since most natural numbers are not squares. At the same time, there is a a one-to-one correspondence of the set of natural numbers $\mathbb N$ on the set of squares $S$ and thus from the perspective of ZF the both sets have the same number of elements.

\section{Bolzano's Conception}

The largest part of Bolzano's conception of infinity can be found in \emph{Paradoxes of the Infinite} \emph{[Paradoxien des Unedlichen]} (PU) (Bolzano 1851/2004) written in 1848 and in \emph{Theory of Science [Wissenschaftslehre]} (WL) (Bolzano 1837/2014) published in 1837.\footnote{We refer to these books by their abbreviations followed by a number of the paragraph.} Cantor's fundamental work had been presented much later in a series of papers published between 1872 and 1883, especially in the last one which he entitled \emph{Foundations of the General Theory of Manifolds} and in \emph{Contributions to the Founding of the General Theory of Transfinite Numbers} (Cantor 1895/1915) 
from 1893 and 1895.

\subsection{Sets, collections, multitudes, series}\label{collection}

Both Bolzano and Cantor were pioneers and advocates of actual infinity in mathematics and sought a way to approach it consistently. Their initial idea is similar. In order to embrace actual infinity, it is necessary to collect it into a whole. Cantor's basic concept is a \emph{set} , Bolzano's basic concept is a \emph{collection}. Although both terms express the wholes, there is a slight but substantial difference between them. 
Cantor's definition of a set is: 
\begin{quote} By a \emph{set [Menge]} we are to understand any collection $M$ into a whole of definite and separate objects $m$ of our intuition or our thought. These objects are called \emph{elements} of $M$. In signs we express this thus
$$M = \{m\}$$ (Cantor, 1895/1915, p. 93)
\end{quote}
The set, as defined by Cantor, and as it is understood today, is a whole that contains all its elements. It is a new object which is at a different \enquote{higher} level than its elements. This is also evident from the notation, which is still used, where we use curly brackets, for example
$\{0,1\}$ is the set containing $0$ and $1$. 

\centerline{$\ast \ast \ast$}

Bolzano's basic notion is a \emph{collection}. 
\begin{quote} The concept which underlies the conjunction \enquote{and}\dots which can be expressed most suitably by the words: 
a \emph{collection [Inbegriff]} of certain things or a whole consisting of certain parts. (PU \S 3)
\end{quote}

Bolzano's \emph{collection} is also a whole containing certain things, but the way of their grasp is different. The things are connected by the conjunction \enquote{and}. Consequently, the collection is at the \enquote{same} level as the things it contains. That's also why Bolzano calls them \emph{parts}. 

A special type of a collection is a multitude.

\begin{quote} A collection which we put under a concept so that the arrangement of its parts is unimportant I call a \emph{multitude [Menge]}. (PU \S 4). \end{quote}

Bolzano's multitude and Cantor's set share many features (Rusnock 2012, p. 157). 
Both Bolzano and Cantor used the same German word \emph{Menge}. It is possible that Cantor adopted it after reading Bolzano's \emph{Paradoxes of the Infinite} (Simons 2005, p. 143). Nevertheless, Bolzano scholars agree that Bolzano's multitudes cannot be interpreted as Cantor's sets. They cannot be interpreted mereologically, too. \begin{quote} Attempts to interpret Bolzano's collections as sets or mereological wholes in the modern sense face insuperable difficulties and I think Peter Simon was exactly right in deeming Bolzano's theory \emph{sui generis}. (Simons 1998, p. 87), (Rusnock 2012, p. 155). \end{quote}
Peter Simon has proposed the English term \emph{multitude} for Bolzano's \emph{Menge} (Simons 1998, p. 95). This translation has been widely accepted. (Russ 2004), (Rusnock 2013).
\medskip

A special type of a multitude is \emph{a plurality [Vielheit]}. Its parts are all considered as units of a certain kind $A$. (PU \S 4). A plurality or a multitude is finite if we can designate things in it by some finite number $n$. (PU \S 22).  And a it is \emph{infinite} if it is greater than every finite plurality.  
\begin{quote}
I shall call a plurality which is greater than every finite one, i.e. a plurality which has the property that every finite multitude represents only a part of it, an \emph{infinite plurality}. (PU \S 9).
\end{quote}

Bolzano's series are also collections, but unlike multitudes, their terms are ordered according to some rule.

\begin{quote}
If a given collection of things $A,B,C, \dots , M, N, \dots$ has the property that for every part $M$, some one and only one, other part $N$ can be identified of a kind we can \emph{determine} by the \emph{same rule} for all parts of the collection either $N$ by its relationship to $M$ or $M$ by its relationship to $N$, then I call this collection a \emph{series [Reihe]} and its parts the \emph{terms} of the series. (PU \S 7).
\end{quote}

The series can have a finite number of terms. Then their sums are finite pluralities. But they can also have an infinite number of terms.

\begin{quote}
There is sometimes a greater and sometimes a smaller multitude of terms in the series being discussed. In particular there can be so many of them that this sequence, to the extent that it is to exhaust \emph{all} these units, may have absolutely \emph{no last term}. (PU \S 9). 
\end{quote}
And those are the series Bolzano is interested in. They represent infinite pluralities.

\subsection{Part-whole principle}\label{PW}
In his \emph{Foundations of a General Theory of Manifolds} from 1883 Cantor chose the existence of a one-to-one correspondence as the only correct criterion for the equality of two sets, the \emph{Cantor principle} (CP). As a result the \emph{Part-Whole Principle} (PW) is not valid for infinite sets. 
Cardinal numbers that denote the number of elements of sets
\begin{quote} \dots are the result of double act of abstraction, abstraction of the nature of its various elements and of the order in which they are given, (Cantor 1895, p. 86). \end{quote} 

Note that they are also an abstraction from the \emph{way sets are formed}. As a consequence, the cardinal number $\aleph_0$, which denotes the number of elements of the set of all natural numbers, is the same for all its infinite subsets, for example the set of squares. The set of all rational numbers and of all algebraic numbers also have the same cardinal number $\aleph_0$. 

From Cantor's famous diagonal proof follows that the set of real numbers has a greater cardinal number. The set of all points on any line segment, the set of all points in the whole infinite three-dimensional or n-dimensional infinite space and all other continua have the same cardinal number. 

$$\ast \ast \ast$$

Bolzano was aware of the existence of a one-to-one correspondence between some infinite sets, such that one is a proper subset of the other. He calls this property a \emph{highly remarkable peculiarity} (PU, \S 21). However, he insists on the validity of PW. A mere one-to-one correspondence is not a sufficient condition to determine that two sets have the same number of elements. 
\begin{quote} Merely from this circumstance we can - as we see - in no way conclude \emph{that these multitudes are equal to one another if they are infinite} with respect to the plurality of their parts (i.e. if we disregard all differences between them). But rather they are able to have a relationship of unequality in their plurality, so that one of them can be presented as a whole, of which the other is a part. (PU \S 21).
\end{quote}
Only in some cases it is possible to determine that two sets have the same number of elements.
\begin{quote} An equality of these multiplicities can only be concluded if some other reason is added, such as that both multitudes have exactly the same \emph{determining ground} [Bestimmungsr\H{u}nde], e.g. they have exactly the same \emph{way of being formed} [Entstehungsweisse]. (PU \S 21). \end{quote} 
Bolzano does not explain a meaning of the term \enquote{determining ground} nor the \enquote{way of being formed} in \emph{Paradoxes of the Infinite}. But from other Bolzano's texts follows that to \enquote{determine an object} means to describe all representation that the object falls under. The determination is complete if the representation of an object is unique (\v{S}ebest\'{i}k 1992, p. 460). However, the exact mathematical meaning is not entirely clear. 

Some examples of equal multitudes are given in PU. Bolzano denotes $Mult(b - a)$ the multitude of all quantities lying between two boundary quantities $a$ and $b$.\footnote{Bolzano apparently has in mind the open interval $(a,b)$ of his \emph{measurable numbers} which are isomorphic to our \emph{real numbers}, see (Russ, Trlifajov\'{a} 2016).}
\begin{quote} There will be enumerable equations of the following form:
$$Mult(8-7) = Mult(13-12)$$
and also of the form
$$Mult(b-a) : Mult(d-c) = (b-a):(d-c)$$ 
against the correctness of which no valid objection can be made (PU \S 29). 
\end{quote}
The sufficient condition for the equality and even for the ratio of two intervals is an isometry relation, i.e. distance-preserving one-to-one correspondence. The second example is geometrical. 
\begin{quote}
Every spatial extension that is not only similar to another but also geometrically equal (i.e. coincides with it in all characteristics that are conceptually representable through comparison with a given distance) must also have an equal multitude of points. (PU \S 49. 2.) 
\end{quote}

\subsection{Infinite quantities}

Bolzano in his \emph{Paradoxes of the Infinite} deals with infinite quantities that arise as infinite series. I will briefly introduce his theory using his own examples, as the principles on which he built are obvious from those examples. He presents the series of 
natural numbers and the series of squares. 
$$P = 1 + 2 + 3 + 4 + \dots + \text{in inf.} $$
$$S = 1 + 4 + 9 + 16 + \dots + \text{in inf.} $$

It is no coincidence that Bolzano marks the end of an infinite series with the symbol \enquote{ ... in inf.} An important assumption is that all the infinite series have one and the same multitude of terms, unless explicitly stated otherwise. About the two listed series $P, S$ Bolzano writes

\begin{quote}{The multitude of terms in both series is certainly the same. By raising every single term of the series $P$ to the square into the series $S$, we alter merely the nature (magnitude) of these terms, not their plurality.} \end{quote} 
Based on this assumption and comparing the corresponding terms of the series, it turns out that the second series $S$ is greater than the first series $P$.
$S$ is even infinitely greater than $P$, meaning that $S$ is greater than every finite multiple of $P$. If we subtract\footnote{Let us notice that Bolzano subtracts the series pointwise.} successively the series $P$ from $S$ then we obtain the differences
\begin{itemize}
\item $S - P = 0 + 2 + 6 + 12 + \dots + \text{in inf.} $
\item $S - 2 P = -1 + 0 + 3 + 8 + \dots + \text{in inf.} $
\item $S - m P = (1-m) + \dots + (n^2-mn) + \dots + \text{in inf.} $
\end{itemize}
In these series, only a finite multitude of terms, namely $m-1$, is negative, and the $m$-th term is 0, but all successive terms are positive and grow indefinitely. Every series is positive. 

The same assumption follows from the following Bolzano's example. If we remove several elements from an infinite series, it has less elements by the exact amount we removed. Its sum is smaller exactly by the sum of the elements removed. For example, a series
$$N_0 = 1 + 1 + 1 + 1 + \dots + \text{in inf.} $$
contains only 1. It differs from the series 
$$N_n = \underbrace{\dots }_n 1 + 1 + 1 + \dots + \text{in inf.}, $$ which looks similar but we create it by adding $1$ only from the $(n+1)$-st term. The first $n$ terms are omitted. 
\begin{quote}We obtain the certain and quite unobjectionable equation 
$$n = N_0 - N_n.$$
from which we see that two infinite quantities $N_0$ and $N_n$ have a completely definite finite difference $n$. (PU \S 29). \end{quote}

\subsection{Interpretation}

Can the Bolzano series of natural numbers be consistently interpreted in contemporary mathematics? Yes, surprisingly yes, using sequences of partial sums (Trlifajov\'{a} 2018). Sequences have the advantage of having a clearly defined number of terms, and each term has a specific order.\footnote{It is not by chance that the ultraproduct which is the basis of the non-standard analysis is defined on \emph{sequences} of real numbers.} By converting the series to a sequence, we satisfy the requirement that the series always have one and the same number of elements. We can express naturally some specific Bolzano's quantities that are otherwise difficult to describe, such as $N_n$ and its difference from $N_0$. 

\begin{definition} Let $a_1 + a_2 + a_3 + \dots + \text{in inf.}$ be the Bolzano series of natural numbers, $a_i \in \mathbb N.$ It corresponds to the sequence $(s_1, s_2, s_3, \dots ) \in \mathbb N^\mathbb N$ where for all $n \in \mathbb N, s_n = a_1 + \dots + a_n.$
$$a_1 + a_2 + a_3 + \dots + \text{in inf.} \sim (s_1, s_2, s_3, \dots ) = (s_n),$$ 
\end{definition}

\begin{example} 

\begin{itemize}
\item $P = 1 + 2 + 3 + 4 + \dots + \text{in inf.} \sim (1, 3, 6, 10, \dots ) = (\frac {n \cdot (n+1)}{2})$
\item $S = 1 + 4 + 9 + 16 + \dots + \text{in inf.} \sim (1, 5, 16, 32, \dots ) = (\frac{n(n+1)(2n+1)}{6})$
\item $N_0 = 1 + 1 + 1 + 1 + \dots + \text{in inf.} \sim (1, 2, 3, \dots) = (n)$
\item $N_n = \underbrace{\dots }_n 1 + 1 + 1 + + \dots + \text{in inf.} \sim ( \underbrace{0, \dots, 0}_n, 1, 2, 3, \dots )$

\end{itemize}
\end{example}
We interpret Bolzano's series of natural numbers as non-decreasing sequences of numbers. The following definition corresponds to Bolzano's way of adding series. 
\begin{definition}\label{plus} Let $(a_n), (b_n)$ be two sequences of natural numbers. We define their sum and their product componentwise:
$$(a_n) + (b_n) = (a_n + b_n).$$
$$(a_n) \cdot (b_n) = (a_n \cdot b_n).$$
If $(\forall n)(a_n > b_n)$ then we define the difference of two sequences 
$$(a_n) - (b_n) = (a_n - b_n).$$ 

\end{definition}

Bolzano demanded associativity and commutativity for terms of the series. It means that if we change the order of the finite amount of terms of the series, the sum of the series does not change. Therefore, we define that two sequences are equal, if their terms are equal starting from a sufficiently large index. Similarly, we define the ordering. 

\begin{definition}\label{equality} Let $(a_n), (b_n)$ be two sequences of natural numbers. 
$$(a_n) =_\mathcal F (b_n) \text{ if and only if } (\exists m)(\forall n)(n > m \Rightarrow a_n = b_n).$$
$$(a_n) <_\mathcal F (b_n) \text{ if and only if }(\exists m)(\forall n)(n > m \Rightarrow a_n < b_n).$$
\end{definition}
\begin{proposition} The relation $=_\mathcal F$ is an equivalence. The result of this equality is the same as if we define equality by the Fr\'{e}chet filter on natural numbers. The relation $<_\mathcal F$ is a strict partial ordering. 
\end{proposition}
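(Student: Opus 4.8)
The plan is to verify each of the three claims directly from Definition \ref{equality}, using throughout the single structural fact that a witness $m$ constrains only a finite initial segment, so that any two witnesses can be merged by taking their maximum. For the equivalence, reflexivity is immediate: for any $(a_n)$ the witness $m=0$ gives $a_n=a_n$ for all $n>0$. Symmetry is equally direct, since the same witness $m$ serves both orderings of the equation $a_n=b_n$. For transitivity, I would suppose $(a_n)=_\mathcal{F}(b_n)$ with witness $m_1$ and $(b_n)=_\mathcal{F}(c_n)$ with witness $m_2$; setting $m=\max(m_1,m_2)$, every index $n>m$ satisfies both $a_n=b_n$ and $b_n=c_n$, hence $a_n=c_n$, so $m$ witnesses $(a_n)=_\mathcal{F}(c_n)$.

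Next I would identify the relation with the Fr\'echet filter. The key elementary observation is that a set $A\subseteq\mathbb{N}$ contains a tail $\{n:n>m\}$ for some $m$ precisely when its complement $\mathbb{N}\setminus A$ is finite, i.e. when $A$ belongs to the Fr\'echet filter. Applying this to $A=\{n:a_n=b_n\}$ shows that the existence of a witness $m$ in Definition \ref{equality} is equivalent to $\{n:a_n\neq b_n\}$ being finite, which is exactly the statement that $(a_n)$ and $(b_n)$ agree on a set in the Fr\'echet filter. One direction takes the maximum of the finite exceptional set as the witness; the other bounds the exceptional set inside $\{1,\dots,m\}$.

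Finally, for $<_\mathcal{F}$ I would check irreflexivity and transitivity, which together yield a strict partial order (asymmetry then being automatic). Irreflexivity holds because $a_n<a_n$ can hold for no $n$, so no witness can exist for $(a_n)<_\mathcal{F}(a_n)$. Transitivity again runs through $m=\max(m_1,m_2)$: for $n>m$ the chain $a_n<b_n<c_n$ gives $a_n<c_n$. To confirm that $<_\mathcal{F}$ is genuinely an order on the quotient by $=_\mathcal{F}$, I would also note compatibility: if $(a_n)=_\mathcal{F}(a'_n)$, $(b_n)=_\mathcal{F}(b'_n)$ and $(a_n)<_\mathcal{F}(b_n)$, then beyond the maximum of the three witnesses the strict inequality transfers verbatim to the primed sequences.

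I expect no serious obstacle here; the arguments are uniform eventual-behaviour manipulations resting on the stability of the witness under finite modification. The only point demanding a little care is the second claim, where one must translate between the syntactic form $(\exists m)(\forall n>m)$ and the set-theoretic notion of cofiniteness; recognising that \emph{containing a tail} and \emph{cofinite} coincide on $\mathbb{N}$ is the single fact that makes the whole proposition routine.
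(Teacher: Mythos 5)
Your proposal is correct and follows essentially the same route as the paper's proof: direct verification of the eventual-agreement properties, plus the observation that a subset of $\mathbb{N}$ contains a tail exactly when it is cofinite, i.e.\ belongs to the Fr\'{e}chet filter. You are in fact more explicit than the paper (which leaves reflexivity, symmetry, transitivity, irreflexivity and the quotient-compatibility of $<_\mathcal{F}$ as ``obvious''), and your max-of-witnesses bookkeeping fills those steps in correctly.
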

\begin{proof} The Fr\'{e}chet filter is the set of all complements of finite subsets of natural numbers. 
$$\mathcal F = \{A \subseteq \mathbb N; \mathbb N \setminus A \textrm{ is finite}\}$$
Let $(a_n), (b_n)$ be two sequences of natural numbers. Their equality by the Fr\'{e}chet filter is defined
$$(a_n) =_\mathcal F (b_n) \text{ if and only if } \{n; a_n = b_n\} \in \mathcal F.$$
If two sequences are equal from a sufficiently large index then they differ only in finitely many terms. Thus they are equal by the Fr\'{e}chet filter. Vice versa if two sequences are equal by the Fr\'{e}chet filter then they must be equal from a sufficiently large index. So, the designation $<_\mathcal F$ is justified. 

A strict partial ordering must be irreflexive and transitive. These properties are obvious from Definition \ref{equality}.
\end{proof}
\begin{theorem}
Let $S = \{(a_n), a_n \in \mathbb N \wedge (\forall n)(a_n \leq a_{n+1}\}$ be the set of non-decreasing sequences of natural numbers. Then the structure $(S, + , \cdot, =_\mathcal F, <_\mathcal F)$ where the equality and the ordering is defined by Fr\'{e}chet filter is a partial ordered non-Archimedean commutative semiring. 
\end{theorem}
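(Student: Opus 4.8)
The plan is to obtain every semiring axiom termwise from the fact that $(\mathbb N, +, \cdot)$ is already a commutative semiring, and then to treat the order and the non-Archimedean property on their own. First I would secure the two facts that make the quotient structure meaningful. \emph{Closure}: if $(a_n)$ and $(b_n)$ are non-decreasing, then $a_n + b_n \le a_{n+1} + b_{n+1}$ and $a_n b_n \le a_{n+1} b_{n+1}$, so $S$ is closed under both operations of Definition \ref{plus}. \emph{Congruence}: $=_\mathcal F$ respects $+$ and $\cdot$, since if $(a_n) =_\mathcal F (a'_n)$ beyond index $m_1$ and $(b_n) =_\mathcal F (b'_n)$ beyond index $m_2$, then the corresponding sums and products coincide beyond $\max(m_1, m_2)$. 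Only once these hold is it legitimate to regard $+$ and $\cdot$ as operations on $=_\mathcal F$-classes.

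Next I would dispatch the algebraic axioms. Commutativity and associativity of both operations, distributivity of $\cdot$ over $+$, and the annihilation law all hold at every index because they hold in $\mathbb N$, hence hold under $=_\mathcal F$ a fortiori. The constant sequence $(0,0,0,\dots)$ is the additive identity and the constant sequence $(1,1,1,\dots)$ the multiplicative identity; both are non-decreasing and so belong to $S$. This already yields that $(S, +, \cdot, =_\mathcal F)$ is a commutative semiring.

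For the ordering, the preceding Proposition supplies that $<_\mathcal F$ is a strict partial order, so only compatibility with the operations remains. Additive monotonicity is immediate: if $a_n < b_n$ for all $n > m$, then $a_n + c_n < b_n + c_n$ for the same indices. Multiplicative monotonicity requires a positive multiplier: if $(a_n) <_\mathcal F (b_n)$ and $(c_n) >_\mathcal F (0,0,\dots)$, then past a common threshold one has both $a_n < b_n$ and $c_n > 0$, whence $a_n c_n < b_n c_n$. I would stress here that the order is only partial and not linear, since two non-decreasing sequences may cross infinitely often; for example $(0,2,2,4,4,\dots)$ and $(1,1,3,3,5,\dots)$ satisfy none of $<_\mathcal F$, $=_\mathcal F$, $>_\mathcal F$.

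It remains to verify that the structure is non-Archimedean, for which I would exhibit an explicit witness. Let $u = (1,1,1,\dots)$ and $v = (1,2,3,\dots)$, both positive elements of $S$. For each fixed $k \in \mathbb N$ the $k$-fold sum is $k \cdot u = (k,k,k,\dots)$, and since $k < n$ for every $n > k$ we get $k \cdot u <_\mathcal F v$. Hence no finite multiple of $u$ dominates $v$, which is precisely the failure of the Archimedean property. I expect the mechanical termwise checks to be entirely routine; the only points demanding genuine care are the congruence property of $=_\mathcal F$ (without which the quotient operations are ill-defined) and the positivity hypothesis in the multiplicative monotonicity step, while the non-Archimedean clause becomes easy once the witness pair $u, v$ is in hand.
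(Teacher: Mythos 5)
Your proof is correct and follows essentially the same route as the paper's: termwise verification of the semiring axioms inherited from $(\mathbb N,+,\cdot)$, together with an explicit witness for the failure of the Archimedean property (the paper uses $(n^2)$ against $(n)$ and the constant sequences, you use $(n)$ against $(1,1,1,\dots)$; both work). The closure of $S$, the congruence property of $=_\mathcal F$, and the order-compatibility checks that you spell out are details the paper leaves implicit, so this is a tightening of the same argument rather than a different approach.
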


\begin{proof}
The properties of a commutative semiring are associativity, commutativity, distributivity and the existence of neutral element both for addition and multiplication. They are clearly valid from Definition \ref{plus}. The sequence $(n^2)$ is infinitely greater than the sequence $(n)$ and it is infinitely greater than any constant sequence, thus it is a non-Archimedean structure. 

\end{proof}

However, Bolzano goes further. Terms of his infinite series may not only be natural numbers but also other quantities, especially Bolzano's measurable numbers, which are isomorphic to contemporary real numbers. 
We interpret them as sequences of real numbers, arithmetic operations and order are defined similarly as before. Then we can prove the following theorem. (Russ, Trlifajov\'{a} 2016). 
\begin{theorem}
Let $\mathbb R^\mathbb N$ be the set of sequences sequences of real numbers. Then the structure $(\mathbb R^\mathbb N, + , \cdot, =_\mathcal F, <_\mathcal F)$ where the equality and the ordering is defined by Fr\'{e}chet filter is a partial ordered non-Archimedean commutative ring. 
\end{theorem}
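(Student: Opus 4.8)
The plan is to obtain the ring structure from the componentwise operations on the direct power $\mathbb{R}^{\mathbb{N}}$ and then verify that the Fr\'echet-filter equality and ordering are compatible with it. First I would observe that, since $\mathbb{R}$ is a commutative ring and Definition \ref{plus} defines $+$ and $\cdot$ coordinatewise, associativity, commutativity, distributivity, the additive neutral element $(0)$ and the multiplicative neutral element $(1)$ are all inherited coordinate by coordinate, so $(\mathbb{R}^{\mathbb{N}},+,\cdot)$ is a commutative ring. The decisive difference from the semiring $S$ of the previous theorem is that each sequence now has an additive inverse $-(a_n)=(-a_n)$, so subtraction is everywhere defined, without the restriction $a_n>b_n$ that was forced over $\mathbb{N}$; this is exactly what upgrades \emph{semiring} to \emph{ring}.

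Second, I would show that $=_\mathcal{F}$ is a ring congruence, which is what makes the operations well defined on the structure whose equality is $=_\mathcal{F}$. The clean route is to consider the set $I$ of sequences that are eventually $0$, i.e.\ the finitely supported sequences. One checks that $I$ is an ideal: a sum of two eventually-zero sequences is eventually zero, and the product of an eventually-zero sequence with any sequence is eventually zero. By Definition \ref{equality}, $(a_n)=_\mathcal{F}(b_n)$ holds precisely when $(a_n)-(b_n)\in I$, so $=_\mathcal{F}$ is congruence modulo $I$ and the quotient $\mathbb{R}^{\mathbb{N}}/I$ is the commutative ring being asserted.

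Third, for the order I would invoke the preceding Proposition, which already yields that $<_\mathcal{F}$ is irreflexive and transitive, hence a strict partial order, and I would add the two compatibility conditions of a partially ordered ring. Translation invariance follows because adding the same term to both sides of an eventual strict inequality preserves it, and positivity of products follows because the product of two eventually-positive reals is eventually positive; one also notes that $<_\mathcal{F}$ respects $=_\mathcal{F}$, since altering finitely many terms cannot destroy an eventual strict inequality, so the order descends to the quotient. For the non-Archimedean property I would exhibit a single infinitely large element: the sequence $(n)$ is positive and dominates every constant sequence, because for each fixed $m$ we have $m<n$ for all $n>m$, i.e.\ $(m)<_\mathcal{F}(n)$, so no finite multiple $m\cdot(1)=(m)$ of the unit exceeds $(n)$.

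The main obstacle is not any single calculation---each becomes routine once everything is read coordinatewise---but keeping the statement honest about partiality and legitimizing the word \emph{ring}. The order $<_\mathcal{F}$ is genuinely only partial: oscillating sequences such as $((-1)^n)$ and $(0)$ are $<_\mathcal{F}$-incomparable, so one must resist claiming a linearly ordered ring. The one step that truly has to be carried out with care is the verification that $=_\mathcal{F}$ is a congruence, equivalently that $I$ is an ideal, since this is what justifies treating $\mathbb{R}^{\mathbb{N}}$ as a ring at all once equality is weakened to the Fr\'echet filter.
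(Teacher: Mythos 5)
Your proposal is correct; the paper itself gives no proof of this theorem (it defers to Russ and Trlifajov\'{a} 2016), and its proof of the analogous semiring theorem for $S\subseteq\mathbb N^{\mathbb N}$ consists only of noting that the ring axioms follow componentwise from Definition \ref{plus} and exhibiting an infinitely large element, which is exactly your approach. Your additional details --- the ideal of eventually-zero sequences making $=_\mathcal F$ a congruence, the order-compatibility checks, and the observation that $<_\mathcal F$ is genuinely partial --- are correct fillings-in of steps the paper leaves implicit, not a different route.
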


One can introduce the \emph{cheap version of non-standard analysis} on this structure. (Tao 2012). It is constructive but less powerful than the full version of non-standard analysis.

Only in modern mathematics of the second half of the 20th century did Abraham Robinson show that if we use an ultrafilter instead of Fr\'{e}chet filter in the definition of equality and an ordering of sequences we obtain an ultraproduct of hyperreal numbers - a linearly ordered non-Archimedean field which shares \emph{all} first order properties with the field $\mathbb R$ due to the \emph{transfer principle}.\footnote{Jan Berg was right when he entitled his book on Bolzano \emph{Ontology without Ultrafilters.} (Berg 1992).}

\section{Sizes of calculable sets}

\subsection{Calculable sets}

Bolzano does not determine systematically the relationship between the multitudes of natural numbers and their subsets with some exceptions.\footnote{In the \emph{Theory of Science} from 1837 Bolzano demonstrates that there are infinitely many concepts which encompass infinitely many objects and there are infinite differences in extensions (breadth) of these concepts. He presents the concept of natural numbers, denoted by $n$, which is infinitely greater than the concept of their squares, $n^2$, it is is infinitely greater than the concept of the fourth powers, $n^4$, and so on. 
\begin{quote}
If we designate any whole number whatever by the letter $n$ as an
abbreviation, then the numbers $n, n^2, n^4, n^8, n^{16}, n^{32}$, express concepts,
each of which undoubtedly encompasses infinitely many objects (namely
infinitely many numbers.) \dots 
The extension of the first one is infinitely greater than that of the second one, it is infinitely greater that the third one and so on. 
Now since the sequence
$n, n^2, n^4, n^8, n^{16}, n^{32}$ can be extended as far as we please, we have in it
an example of an infinite sequence of concepts, each of which is an infinite
number of times broader that its successor. (WL \S 102). \end{quote} 
However Jan Berg (1964, p. 177) found in a letter to R. Zimmermann on March 9, 1848 a passage in which Bolzano questions this view. 
\begin{quote} The thing has not only become unclear
but, as I have just come to realize, completely false. If one designates
by $n$ the concept of an arbitrary whole number, then it is therewith already decided which (infinite) sets of objects the sign represents. \dots
The set of objects represented by $n$ is always exactly the same as before although
the objects themselves represented by $n^2$ are not quite the same as those
represented by $n$.

\end{quote}
The problem is what is the exact meaning of concepts designated by $n, n^2, n^4, n^8, \dots $. In WL, it seems that these are sets of numbers obtained by choosing from all numbers. 
In the letter to Zimmermann, Bolzano asserts that all these sets are the same although the objects are not the same. We have returned to our fundamental question. How do we obtain the sets (of squares)? Bolzano's own answer was: it depends on its determination ground. 
\medskip

Jan Berg concluded that Bolzano in the end of his life confined PW and accepted CP (Berg 1973, 27 - 28), but Paul Rusnock (2000, p. 194) and Paolo Mancosu refuse it and find this conclusion unjustified. Mancosu ironically notes
\begin{quote} Thus, Bolzano saved his mathematical soul in extremis and joined the rank of the blessed
Cantorians by repudiating his previous sins. (Mancosu 2009, p. 626) \end{quote}} 

We extend Bolzano's theory of infinite quantities and suggest a method to determine the size of sets that we designate as \emph{calculable}. The size of calculable sets consistently extends the size of finite sets, i.e. its cardinality. PW is valid. We follow two Bolzano principles, which we interpret in the contemporary mathematical context. 
\begin{enumerate}
\item The multitude of terms of infinite series is always \emph{one and the same}. 
\item An equality of two multitudes can be concluded only if some other reason is added such as the same \emph{determining ground} or a \emph{way of being formed}.
\end{enumerate}

\begin{definition}
The set $A$ is called \emph{calculable}, if $A$ can be arranged as a union of disjoint finite sets which can be indexed by natural numbers.\footnote{We denote by $\mathbb N$ natural numbers greater than 0, i.e. $\mathbb N = \{1, 2, 3, \dots \}$. We use this notation rather from aesthetic reasons.} 
$$A = \bigcup\{A_n, n \in \mathbb N\}.$$ 
The arrangement of a calculable set is given by a finite-to-one\footnote{A \emph{finite-to-one} function means that all pre-images $l_A^{-1}(n)$ are finite. See (Benci, Di Nasso 2019, p. 277).} \emph{labelling function} $l_A: A \longrightarrow \mathbb N$ such that
$$l_A(x) = n \Leftrightarrow x \in A_n.$$
\end{definition}

\begin{remark}
\begin{enumerate}
\item Our methodology is close to that of the numerosity theory (NT). We appreciate NT and are grateful for it and for its solid mathematical foundation. However, our conception is based on Bolzano's ideas and was created nearly independently. We shall compare it in Section \ref{num}.
\item We use the provisional working name \emph{calculable} set. It is basically the same as the \emph{labelled} set in NT. We choose a different term on purpose to point out some different assumptions of our theory, and the resulting slightly different conclusions. The notion of a \emph{labelling function} is also borrowed from NT. 

\item Each calculable set is \emph{countable} by Cantor's definition. But it does not have to work the other way around. This applies in particular to ordinal numbers. They are concepts based on CP, which is in dispute with PW. We will not deal with their sizes. The same ordinal number $\omega$ has the set od natural numbers and all its infinite subsets. Ordinal numbers are types of well-orderings. It is meaningless to determine their sizes. 

\end{enumerate}
\end{remark}

\subsection{Characteristic and size sequences}

Let $A = \bigcup\{A_n, n \in \mathbb N\}$ be a calculable set. From the set-size point of view we can describe $A$ by its \emph{characteristic sequence} $\chi(A)$ such that its n-th term is $|A_n|$, the cardinality of $A_n$. The \emph{size} of $|A|$ is represented by the size sequence $\sigma(A)$ which is the interpretation of the Bolzano series 
$$|A| = |A_1| + |A_2| + |A_3| + \dots \text{in inf.} \sim \sigma(A) = (\sigma_n(A)),$$

\begin{remark} We consider the sequence $\sigma(A)$ a single, exactly given \emph{quantity}. There is a conceptual distinction between treating sequences as classical sequences and treating them as interpretations of Bolzano's series. In the former viewpoint they represent unique and exactly given quantities, similarly as in non-standard analysis.
\end{remark} 

\begin{definition} Let $A = \bigcup\{A_n, n \in \mathbb N\}$, where $A_n$ are finite sets, be a calculable set. 
\begin{itemize} 
\item A \emph{characteristic sequence} of a set $A$ is the sequence $\chi(A) = (\chi_n(A)) \in \mathbb N^\mathbb N$ where $$\chi_n(A) = |A_n|.$$
\item A \emph{size sequence}\footnote{The size sequence corresponds to the \emph{approximating sequence} in NT.} of a set $A$
is the sequence $\sigma(A) = (\sigma_n(A)) \in \mathbb N^\mathbb N$ such that $$\sigma_n(A) = |A_1| + \dots + |A_n|.$$
\end{itemize}
\end{definition}

The notions of arrangement, labelling function, characteristic and size sequence are closely connected. One follows from the knowledge of the other. We introduced them all here for better comprehension. Simple relationships apply between them.

\begin{proposition} Let $(\chi_n(A))$ be a characteristic sequence, $(\sigma_n(A))$ a size sequence and $l_A(x)$ a labelling function of a calculable set $A$. Then for all natural numbers $n \in \mathbb N$

\begin{itemize} 
\item $A_n = \{x \in A; l_A(x) = n\}.$
\item $\sigma_n(A) = \sigma_{n-1}(A) + \chi_n(A)$
\item $\chi_n(A) = |\{x; l_A(x) = n\}|. $
\item $\sigma_n(A) = |\{x; l_A(x) \leq n\}| .$

\end{itemize}
\end{proposition}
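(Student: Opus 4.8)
The plan is to read off all four identities from the single defining equivalence $l_A(x) = n \Leftrightarrow x \in A_n$ of the labelling function, together with the disjointness and finiteness of the blocks $A_n$ that are built into the notion of a calculable set. Everything here reduces to unfolding the definitions in a suitable order, so the proof is essentially bookkeeping; the only place that genuinely requires care is the fourth identity, where disjointness must be invoked explicitly.

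First I would establish the first identity, which is immediate. By the equivalence defining $l_A$, the set $\{x \in A; l_A(x) = n\}$ is precisely the collection of elements lying in the block $A_n$, and since $A_n \subseteq A$ this collection is $A_n$ itself. This identity is the hinge for the remaining three, because it is exactly what connects the combinatorial description of $A$ (via the blocks $A_n$) with the functional description (via the fibres of $l_A$).

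The third identity then follows simply by taking cardinalities on both sides of the first: $|\{x; l_A(x) = n\}| = |A_n| = \chi_n(A)$, the last equality being the definition of the characteristic sequence. The second identity is purely arithmetic; from $\sigma_n(A) = |A_1| + \dots + |A_n|$ I would split off the last summand to obtain $\sigma_n(A) = (|A_1| + \dots + |A_{n-1}|) + |A_n| = \sigma_{n-1}(A) + \chi_n(A)$, adopting the convention $\sigma_0(A) = 0$ to cover the base case $n = 1$.

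The fourth identity is where the one subtlety lies. I would first use the first identity termwise to write $\{x; l_A(x) \leq n\} = \bigcup_{k=1}^{n} \{x; l_A(x) = k\} = \bigcup_{k=1}^{n} A_k$. Because the blocks $A_k$ are pairwise disjoint and each is finite, the cardinality of the union equals the sum of the cardinalities, so $|\{x; l_A(x) \leq n\}| = \sum_{k=1}^{n} |A_k| = \sigma_n(A)$. The step that must not be skipped is this appeal to disjointness: without it the cardinality of the union could be strictly smaller than the sum, and the identity would fail. Since disjointness of the blocks is guaranteed by the very definition of a calculable set, the argument closes cleanly, and I expect no further obstacle.
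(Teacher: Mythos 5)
Your proof is correct: all four identities follow by unfolding the definitions exactly as you do, and your explicit appeal to the disjointness of the blocks $A_n$ in the fourth identity is the right (and only) point needing care. The paper itself offers no proof, presenting these as ``simple relationships'' immediate from the definitions, so your argument is just the careful write-up of what the paper leaves implicit.
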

\subsection{Equality and order}

Size sequences represent sizes of calculable sets. We define their arithmetic operations componentwise according to Definition \ref{plus}. We use the Fr\'{e}chet filter $\mathcal F$ to define an \emph{equality} $=_\mathcal F$ and an \emph{ordering} $<_\mathcal F$ as in Definition \ref{equality}. Similarly, we define a relation \emph{less or equal} $\leq_\mathcal F$. 
\begin{definition}\label{Frechet}
Let $A, B$ be two calculable sets. Their size sequences are $\sigma (A) = (\sigma_n(A))$, $\sigma (B) = (\sigma_n(B))$. We define
\begin{itemize}
\item $\sigma (A) =_\mathcal F \sigma (B), \text{ if and only if } (\exists m)(\forall n)(n > m \Rightarrow \sigma_n(A) = \sigma_n(B))$.
\item $\sigma (A) <_\mathcal F \sigma (B), \text{ if and only if } (\exists m)(\forall n)(n > m \Rightarrow \sigma_n(A) < \sigma_n(B))$.
\item $\sigma (A) \leq_\mathcal F \sigma (B), \text{ if and only if } (\exists m)(\forall n)(n > m \Rightarrow \sigma_n(A) \leq \sigma_n(B))$.
\item We identify the constant sequence $(k) \in \mathbb N^\mathbb N$ with the number $k \in \mathbb N$ as usually. 
\end{itemize}

\end{definition} 

\begin{proposition}\label{finite} Let $A, B$ be calculable sets.
Let $S \subseteq \mathbb R^\mathbb N$ be the set of non-decreasing sequences of natural numbers.
\begin{enumerate}
\item If $\sigma ( A) <_\mathcal F \sigma ( B)$ or $\sigma (A) =_\mathcal F \sigma (B)$ then $\sigma (A) \leq_\mathcal F \sigma ( B)$. The reverse implication need not hold true.
\item For any set arrangement of a set $A$ 
$$|A| = n \text{ if and only if } \sigma(A) =_\mathcal F (n).$$

\end{enumerate}

\end{proposition}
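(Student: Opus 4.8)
The plan is to handle the two parts separately, since the first is a definitional implication together with a counterexample, while the second is a finiteness argument run in both directions. Throughout I would use only the relations $=_\mathcal{F}$, $<_\mathcal{F}$, $\leq_\mathcal{F}$ of Definition \ref{Frechet}, together with the fact, immediate from the definition of a size sequence, that $\sigma_n(A) = |A_1| + \dots + |A_n|$ is non-decreasing in $n$, each increment being a cardinality $|A_n| \geq 0$.

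For the forward implication in part 1 I would argue directly. If $\sigma(A) <_\mathcal{F} \sigma(B)$, there is an $m$ with $\sigma_n(A) < \sigma_n(B)$ for all $n > m$; since strict inequality of natural numbers entails $\leq$, the same threshold $m$ witnesses $\sigma(A) \leq_\mathcal{F} \sigma(B)$, and the case $\sigma(A) =_\mathcal{F} \sigma(B)$ is identical with $=$ in place of $<$. The real content is that the converse fails, and this is precisely where the choice of the Fr\'{e}chet filter rather than an ultrafilter matters: $\leq_\mathcal{F}$ does not split into ``$<_\mathcal{F}$ or $=_\mathcal{F}$'' because trichotomy is unavailable. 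I would exhibit a concrete counterexample by taking arrangements with characteristic sequences $\chi(A) = (1,2,1,2,\dots)$ and $\chi(B) = (2,1,2,1,\dots)$, which give size sequences $\sigma(A) = (1,3,4,6,7,9,\dots)$ and $\sigma(B) = (2,3,5,6,8,9,\dots)$. Then $\sigma_n(A) \leq \sigma_n(B)$ for every $n$, so $\sigma(A) \leq_\mathcal{F} \sigma(B)$; yet the two sequences coincide on all even indices and satisfy $\sigma_n(A) < \sigma_n(B)$ on all odd indices, so neither $=_\mathcal{F}$ (they disagree on infinitely many indices) nor $<_\mathcal{F}$ (they agree on infinitely many indices) can hold.

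For part 2 I would exploit that $\sigma(A)$ is non-decreasing with $\sigma_n(A) = |A_1| + \dots + |A_n|$. Assume first $|A| = n$. Since $A$ is a disjoint union of its finite blocks $A_i$ with total cardinality $n$, only finitely many blocks are nonempty; letting $m$ be the last nonempty index, $\sigma_k(A) = \sum_{\text{all } i} |A_i| = n$ for every $k \geq m$, so $\sigma(A) =_\mathcal{F} (n)$. This holds for any arrangement, since $m$ depends on the arrangement but the stabilized value $n$ does not. Conversely, assume $\sigma(A) =_\mathcal{F} (n)$, so $\sigma_k(A) = n$ for all $k > m$. Then for every $k \geq m+2$ the increment $|A_k| = \sigma_k(A) - \sigma_{k-1}(A) = n - n = 0$, whence $A_k = \emptyset$; thus $A = A_1 \cup \dots \cup A_{m+1}$ is a finite union of finite sets, so $A$ is finite with $|A| = \sigma_{m+1}(A) = n$.

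I expect no genuine obstacle in the computations; the only step requiring care is making the counterexample of part 1 honest, i.e. checking that the chosen sequences really arise as size sequences of calculable sets (any sequence of positive cardinalities is the characteristic sequence of some arrangement of a countably infinite set, for instance $\mathbb{N}$ cut into consecutive blocks of the prescribed sizes) and verifying that they simultaneously witness $\leq_\mathcal{F}$ while defeating both $<_\mathcal{F}$ and $=_\mathcal{F}$. This is a conceptual point, namely that passing to a non-ultrafilter keeps the order merely partial, rather than a technical difficulty.
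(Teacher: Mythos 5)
Your proof is correct and takes essentially the same route as the paper: part 1 is read off directly from Definition \ref{Frechet}, and part 2 rests on the observation that a finite set has only finitely many nonempty blocks, so its size sequence stabilizes at the value $n$ regardless of the arrangement. Your explicit counterexample to the reverse implication (two sequences that agree on infinitely many indices and are strictly ordered on infinitely many others) plays exactly the role of the paper's subsequent example with the arrangements $\sigma_A(\mathbb N), \sigma_B(\mathbb N), \sigma_C(\mathbb N)$ and the even/odd example, where $\leq_\mathcal F$ holds while neither $<_\mathcal F$ nor $=_\mathcal F$ does.
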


\begin{proof}
\begin{enumerate}
\item Directly from Definition \ref{Frechet}.
\item The arrangement of a finite set is unimportant. The terms of its size sequence are equal to $n$ from a sufficiently large index. 
\end{enumerate}
\end{proof}

However, this is not the case with infinite sets, see the following example. 

\subsection {Canonical arrangement}
\begin{example}
The set of natural numbers $\mathbb N$ is defined as integers greater than zero
$$\mathbb N = \{1, 2, 3, 4, \dots \}$$
This is a calculable set. Let us consider three of its possible arrangements: 
\begin{enumerate}[1.]

\item Let $l_A(n) = n$ for all $n$. It means $A_n = \{n\}$ for all $n$. Then $\mathbb N = \bigcup\{A_n, n \in \mathbb N\}$, $$\chi_A(\mathbb N) = (1,1,1,1,\dots), \quad \sigma_A(\mathbb N) = (1,2,3,4, \dots)$$ 
\item Let $l_B(n) = n+1$ for odd $n$, $l(n) = n$ for even $n$. It means $B_n = \emptyset$ for odd $n$, $B_n = \{n-1, n\}$ for even $n$, Then $\mathbb N =\bigcup\{B_n, n \in \mathbb N\}$, 
$$\chi_B(\mathbb N) = (0,2,0,2,\dots), \quad \sigma_B(\mathbb N) = (0,2,2,4, \dots).$$ 
\item Let $l_C(n) = n$ for odd $n$, $l(n) = n-1$ for even $n$. It means $C_n = \{n, n+1\}$ for odd $n$, $C_n = \emptyset$ for even $n$. Then $\mathbb N = \bigcup\{C_n, n \in \mathbb N\}$, $$\chi_C(\mathbb N) = (2,0,2,0, \dots), \quad \sigma_C(\mathbb N) = (2,2,4,4, \dots).$$ 
\end{enumerate}
The size sequences are different, though their difference is less or equal to 1. $$\sigma_B(\mathbb N) \leq_\mathcal F \sigma_A(\mathbb N) \leq_\mathcal F \sigma_C(\mathbb N) \leq_\mathcal F \sigma_B(\mathbb N) + 1.$$ 
\end{example}

We introduce a \emph{canonical labelling} and the following \emph{canonical arrangement} of calculable sets, 
which corresponds to the \emph{determining ground} or to the \emph{way of beimg formed}. We define a canonical labelling sequentially from the simple to the more complex sets so that it is unambiguous. 

\begin{definition} 
A calculable set $A$ has a \emph{canonical labelling} $l_A(x): A \rightarrow \mathbb N$ if the following conditions hold: 
\begin{enumerate}

\item 
If $B$ is a canonically arranged set and $x \in A \cap B$ then $$l_A(x) = l(x).$$ 
\item The canonical arrangement corresponds to the determining ground.
\end{enumerate} 
If $A$ has a canonical labelling then $A$ is \emph{canonically arranged}.
\end{definition}
The second condition is not formulated precisely. Its meaning is sometimes intuitively clear, but sometimes we have to look for it carefully.

\begin{theorem}\label{sigma}
If $A, B$ are two calculable canonically arranged sets then 
$$\sigma(A \cup B) = \sigma(A) + \sigma(B) - \sigma(A \cap B)$$
\end{theorem}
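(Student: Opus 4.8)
The plan is to reduce the claimed identity of size sequences to the ordinary finite inclusion--exclusion principle, applied separately at each index $n$. The crucial observation is that a canonical labelling is \emph{globally consistent}: by the first clause of the definition of canonical labelling, any two canonically arranged sets assign the same label to each element they share. Hence there is a single labelling $l$ on $A \cup B$ whose restrictions to $A$, to $B$, to $A \cap B$ and to $A \cup B$ are precisely their canonical labellings. In particular $A \cup B$ and $A \cap B$ are themselves canonically arranged by this common $l$, so their size sequences are defined, and by the identity $\sigma_n(X) = |\{x; l(x) \leq n\}|$ from the proposition relating the size sequence to the labelling function, we may write
$$\sigma_n(X) = |\{x \in X; l(x) \leq n\}|$$
for every $n$ and each $X \in \{A, B, A \cap B, A \cup B\}$.

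First I would fix $n$ and set $A^{(n)} = \{x \in A; l(x) \leq n\}$, and analogously $B^{(n)}$; these are finite sets. Because one and the same labelling governs all four sets, we have $(A \cup B)^{(n)} = A^{(n)} \cup B^{(n)}$ and $(A \cap B)^{(n)} = A^{(n)} \cap B^{(n)}$. The finite inclusion--exclusion identity then yields
$$|A^{(n)} \cup B^{(n)}| + |A^{(n)} \cap B^{(n)}| = |A^{(n)}| + |B^{(n)}|,$$
which is exactly
$$\sigma_n(A \cup B) + \sigma_n(A \cap B) = \sigma_n(A) + \sigma_n(B).$$
Since this holds for every $n$, the two sequences agree termwise, and a fortiori $\sigma(A \cup B) + \sigma(A \cap B) =_\mathcal F \sigma(A) + \sigma(B)$. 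Rearranging gives the stated formula. I would argue through this additive identity rather than directly through the subtraction, so as not to invoke the strict sign condition that Definition \ref{plus} attaches to the difference of sequences; here $\sigma_n(A \cap B) \leq \sigma_n(A) \leq \sigma_n(A) + \sigma_n(B)$, so the difference $\sigma(A)+\sigma(B)-\sigma(A\cap B)$ is in any case a well-defined non-negative sequence.

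I expect the only genuine obstacle to be the justification that $A \cup B$ and $A \cap B$ inherit a canonical arrangement, i.e. that the common labelling $l$ really does serve as their canonical labelling. This rests on the consistency clause of the definition together with the (admittedly informal) requirement that the arrangement match the \emph{determining ground}; once one grants that all canonically arranged sets share one master labelling, the combinatorial core of the theorem is just the levelwise inclusion--exclusion above, and nothing about the Fr\'{e}chet filter or the ordering is needed beyond passing from the termwise equality to $=_\mathcal F$.
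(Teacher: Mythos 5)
Your proposal is correct and follows essentially the same route as the paper: the paper's own proof is just the one-line remark that the identity ``follows from the uniqueness of the canonical arrangement,'' and your argument is precisely the expansion of that remark --- a single shared labelling $l$ on all four sets, the identity $\sigma_n(X)=|\{x\in X;\, l(x)\le n\}|$, and levelwise finite inclusion--exclusion. Your honest flagging of the one genuine gap (that $A\cup B$ and $A\cap B$ inherit a canonical arrangement, which rests on the informal ``determining ground'' clause) is a gap in the paper's definition itself, not in your reasoning.
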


\begin{proof}
It follows from the uniqueness of the canonical arrangement. 
\end{proof}

\begin{consequence} Let $A, B$ be two calculable sets. Then
\begin{enumerate}[(i)]
\item $\sigma(A) = 0$ if and only if $A = \emptyset$.
\item If $A$ is finite, i.e. $(\exists n \in \mathbb N)(|A| = n)$, then $\sigma(A) =_\mathcal F (n)$.
\item If $A$ is a proper subset of $B$, i.e. $A \subset B$, then $\sigma(A) < \sigma(B)$
\footnote{The Part-whole principle.}
\item If $A, B$ are disjoint, i.e. $A \cap B = \emptyset$, then $\sigma (A \cup B) = \sigma ( A) + \sigma ( B)$.
\item $\sigma(A) < \sigma(B) \Rightarrow \sigma(A) + 1 \leq \sigma(B)$\footnote{This property is called a \emph{discreteness}, (Parker 2013).}
\end{enumerate}
\end{consequence}

\subsection{Natural numbers} 

We start with the canonical labelling of natural numbers, which is evident, $l(n) = n$. Each set with the same arrangement as natural numbers has the same labelling function. 

\begin{definition}
\begin{itemize} Let $\mathbb N = \{1, 2, 3, 4, \dots \}$ be the set of all natural numbers.
\item The \emph{canonical labelling} of natural numbers $\mathbb N$ is the function $l: \mathbb N \longrightarrow \mathbb N$ such that $$l(n) = n.$$
\item If $B$ is a set which is indexed by natural numbers, i. e. $B = \{b_n, n \in \mathbb N\}$, then its canonical labelling is the function $l: B \longrightarrow \mathbb N$ such that
$$l(b_n) = n.$$ 
\end{itemize} 
\end{definition}

\begin{example} Let $\mathbb N$ be the set of natural numbers. 
\begin{enumerate}
\item $\chi(\mathbb N) = (1,1,1,1, \dots)$, 
$\sigma(\mathbb N) = (1,2,3,4, \dots).$
\item $\chi( \{3, 4\}) = (0,0,1,1,0,0 \dots)$, 
$\sigma( \{3, 4\}) = (0,0,1,2,2,2, \dots) =_\mathcal F 2$. 
\item $\chi(\mathbb N \setminus \{3, 4\}) = (1, 0, 0, 1, 1, 1, \dots)$, 
$\sigma(\mathbb N \setminus \{3, 4\}) = (1, 2, 2, 2, 3, 4, \dots) =_\mathcal F \sigma(\mathbb N) - 2$.
\item Let $E$ be the set of even numbers. Then $\chi(E) = (0,1,0,1,0,1, \dots)$, 
$\sigma(E) = (0,1,1,2,2,3,3 \dots)$. 
\item Let $O$ be the set of odd numbers. Then $\chi(O) = (1,0,1,0,1,0 \dots)$, 
$\sigma (O) = (1,1,2,2,3,3, \dots)$. 
\item Let $S$ be the set of square numbers, $S = \{1, 4, 9, 16, \dots\}$. Then $\chi(S) = (1,0,0,1,0,0,0,0,1,0 \dots)$, 
$\sigma (S) = (1,1,1,2,2,2,2,2,3,3 \dots) <_\mathcal F \sigma(\mathbb N)$ .

\end{enumerate}
\end{example}

\begin{remark} Size sequences of subsets of natural numbers are non-decreasing sequences such that the following term is always either equal to the preceding one or to the preceding term plus 1, i.e. if
$A \subseteq \mathbb N$ and $\sigma(A) = (a_n)$ is its size sequence then 
$$(\forall n)(a _{n+1} = a_n \vee a _{n+1} = a_n+1) $$ 
And vice versa, if $(a_n)$ is a non-decreasing sequence of natural numbers with this property then there is a subset $A$ of natural numbers such that $(a_n)$ is its size sequence. 
\end{remark}

\begin{definition}
The sequence $\sigma (\mathbb N)$ represents the size of natural numbers. We denote it symbolically as
$$\alpha = \sigma (\mathbb N)= (1, 2, 3, \dots)$$ 
\end{definition}

The sequence $\alpha$ is the interpretation of Bolzano's series 
$$\alpha \sim 1 + 1 + 1 + \dots \text{in inf.} = N_0$$
which represents the multitude of all natural number. It is neither the greatest finite number nor the smallest infinite number, see the next Section \ref{omega}.

\subsection{What is $\alpha$? }\label{omega}

There is a significant difference between Cantor's $\omega$ and the sequence $\alpha$ which is the interpretation of Bolzano's series
$$N_0 = 1+ 1+ 1+ \dots \text{in inf.} \sim \alpha = (1,2,3, \dots )$$ 
The series $N_0$, as well as $\alpha$ is a fixed quantity. It is infinite, i.e. it is greater than every finite number. It is on \enquote{the same level} as the terms it contains, see Section \ref{collection}. It is neither the greatest finite number 
\begin{quote}
\dots because it is a self-contradictory concept (PU \S 15)\end{quote}
nor the smallest infinite number. If we subtract from $\alpha$ any finite number $m \in \mathbb N$ then still the difference $\alpha - m$ 
remains infinite. 

We do not have to require even natural $\mathbb N$ numbers to form a set. They can be a proper class. \footnote{This conception is built and philosophically justified in the \emph{Alternative Set Theory} of Petr Vop\v{e}nka (Vopenka 1974). Natural numbers do not form a set but a \enquote{semiset} which is a vague part of some greater set. The \emph{Prolongation axiom} guarantees the existence of an infinitely great number. It is motivated phenomenologically.} 
We only assume there is an infinite number $\alpha$ that is greater than all finite numbers. It is similarly in a non-standard model of natural numbers or in the numerosity theory, where $\alpha$ is defined as a \enquote{new} number which can be considered as the sequence $(n)$. (Benci, Di Nasso 2019, p. 15).

\subsection{Partial or linear order?}\label{partial order}
\begin{example}Let $E$ be the set of even numbers, $O$ the set of odd numbers. 
\begin{enumerate}

\item $\sigma(E) <_\mathcal F \alpha$, $\sigma(O) <_\mathcal F \alpha.$
\item $\sigma(E) + \sigma (O) = \alpha$
\item \label{even} $\sigma(E) \leq_\mathcal F \sigma (O) \leq_\mathcal F \sigma(E) + 1$\footnote{If we defined natural numbers as the set $\{0, 1, 2, \dots \}$ then the relation among the sizes of even numbers and odd numbers would be opposite.} 
\item $0 \leq_\mathcal F \sigma (O) - \sigma (E) \leq_\mathcal F 1.$ 
\end{enumerate}
\end{example}

The relationship between sizes of some sets is not entirely determinable. Neither $\sigma(E) <_\mathcal F \sigma (O)$ nor $\sigma(O) <_\mathcal F \sigma (E)$. However their difference is less than or equal to 1.

We see that the ordering of size sequences is only partial and not linear.\footnote{The ordering $<$ defined on a set $S$ is \emph{linear (total)} if for every $x,y \in S$ one of the three options holds true $$x < y \vee x = y \vee y < x.$$} If we used a non-principal ultrafilter in the Definition \ref{Frechet} instead of the Fr\'{e}chet filter we could easily extend it to a linear ordering. But the result would depend on the choice of an ultrafilter which is to a certain extent accidental (Benci, Di Nasso 2003).

\begin{example} If we used an ultrafilter $\mathcal U$ for the determination of the relation of the sizes of odd and even numbers then there are two options: 
\begin{itemize}
\item If $E \in \mathcal U$ then $\sigma(O) =_\mathcal U \sigma(E)$, thus $\alpha = 2 \cdot \sigma(E)$ and $\alpha$ is even.
\item If $E \notin \mathcal U$ then $\sigma(O) =_\mathcal U \sigma(E) + 1$, thus $\alpha = 2 \cdot \sigma(E) + 1$ and $\alpha$ is odd. 
\end{itemize}

The answer to the question of whether $\alpha$ is even or odd depends on the choice of an ultrafilter.
\end{example}

We use only Fr\'{e}chet filter and the results of relations $=_\mathcal F, <_\mathcal F, \leq_\mathcal F$ are given uniquely. Moreover, Fr\'{e}chet filter is contained in every ultrafilter. Therefore, these relations are valid for any eventual extension to an ultrafilter. For these reasons, it seems more appropriate to keep using Fr\'{e}chet filter and admit that we cannot determine some properties of infinite quantities.

It is quite natural that our knowledge of the relationships of the sizes of some infinite sets is not complete. Already Bolzano wrote: 

\begin{quote}Even the concept of a calculation of the infinite has, I admit, the appearance of being self-contradictory. \dots But this doubtfulness disappears if we take into account that 
a calculation of the infinite done correctly does not aim at a 
calculation \dots of infinite plurality in itself, 
but only a determination of the \emph{relationship} of one infinity to another. This is a matter which is feasible, in certain cases at any rate. 
(PU \S 28.) \end{quote}

\subsection{Set sizes of multiples, powers and primes}

As an example, we demonstrate how to determine sizes of some subsets of natural numbers and their relationship.
\begin{theorem}
Let $k \in \mathbb N$. 
\begin{enumerate}
\item The size of all primes $P$ is less than the size of multiples $M_k$ for any $k \in \mathbb N$.
\item The size of all primes $P$ is greater than the size of k-th powers $S_k$ for any $k \in \mathbb N$. 
\end{enumerate} 
\end{theorem}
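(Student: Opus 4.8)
The plan is to reduce both claims to comparisons of three classical counting functions and then exploit that the Fréchet ordering $<_\mathcal{F}$ only demands eventual dominance. First I would invoke the earlier Proposition relating a labelling function to its size sequence, whose last item gives $\sigma_n(A) = |\{x : l_A(x) \le n\}|$. For a subset $A \subseteq \mathbb{N}$ carrying the canonical labelling $l(x)=x$ inherited from $\mathbb{N}$, this reads $\sigma_n(A) = |\{x \in A : x \le n\}|$, the number of elements of $A$ not exceeding $n$. Applying this to the three sets yields, for every $n$,
$$\sigma_n(P) = \pi(n), \qquad \sigma_n(M_k) = \lfloor n/k \rfloor, \qquad \sigma_n(S_k) = \lfloor n^{1/k} \rfloor,$$
where $\pi$ is the prime-counting function. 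By Definition \ref{Frechet}, the two assertions $\sigma(P) <_\mathcal{F} \sigma(M_k)$ and $\sigma(S_k) <_\mathcal{F} \sigma(P)$ amount precisely to the eventual strict inequalities $\pi(n) < \lfloor n/k \rfloor$ and $\lfloor n^{1/k} \rfloor < \pi(n)$ for all sufficiently large $n$.

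For part 1 I would use that the primes have asymptotic density zero, $\pi(n) = o(n)$, a consequence of Chebyshev's upper estimate $\pi(n) = O(n/\ln n)$ (or the Prime Number Theorem $\pi(n) \sim n/\ln n$). Fixing $k$, density zero gives $\pi(n) < n/(2k)$ for large $n$, while $\lfloor n/k \rfloor \ge n/k - 1 \ge n/(2k)$ once $n \ge 2k$; chaining these yields $\pi(n) < \lfloor n/k \rfloor$ eventually, which is exactly $\sigma(P) <_\mathcal{F} \sigma(M_k)$. This direction holds for every $k \ge 1$ (for $k=1$ it is just $P \subsetneq \mathbb{N}$).

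For part 2 the inequality is reversed, so I would instead need a power-saving lower bound, namely Chebyshev's estimate $\pi(n) \ge c\,n/\ln n$ for some constant $c>0$. Here one must read the claim for proper powers $k \ge 2$: for $k=1$ one has $S_1 = \mathbb{N}$, whence $\sigma(P) <_\mathcal{F} \sigma(S_1)$ and the stated inequality degenerates, so this case is excluded. For $k \ge 2$ we have $\lfloor n^{1/k} \rfloor \le n^{1/2}$, and since $n^{-1/2}\ln n \to 0$ the quantity $n^{1/2}$ is $o(n/\ln n)$. Hence for all large $n$, $\lfloor n^{1/k}\rfloor \le n^{1/2} < c\,n/\ln n \le \pi(n)$, which is $\sigma(S_k) <_\mathcal{F} \sigma(P)$, uniformly in $k$.

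The framework half of the argument — translating sizes into counting functions and reading $<_\mathcal{F}$ as eventual dominance — is entirely routine. The genuine content, and the only real obstacle, is the number-theoretic input: the proof cannot avoid importing both the upper bound $\pi(n) = o(n)$ and a lower bound of strength $\pi(n) \gg n/\ln n$. Elementary devices such as Euclid's argument, Bertrand's postulate, or the squarefree-decomposition count give only bounds of order $\ln n$, which are too weak to outgrow $n^{1/2}$; the lower bound genuinely requires Chebyshev's analysis of the central binomial coefficient (or the Prime Number Theorem). Once those two estimates are granted, both parts follow from the elementary comparisons above.
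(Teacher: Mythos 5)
Your proof is correct and follows essentially the same route as the paper: both reduce the claims to eventual pointwise comparisons of $\pi(n)$ with $\lfloor n/k\rfloor$ and $\lfloor n^{1/k}\rfloor$ and invoke the Chebyshev-type bounds $n/\log n \le \pi(n) \le 3n/\log n$. Your explicit observation that part 2 requires $k \ge 2$ (since $S_1 = \mathbb{N}$ makes the stated inequality fail) is a caveat the paper's proof passes over silently via the step $\sqrt[k]{n} \le \sqrt{n}$.
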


\begin{proof}
Let $P$ be the set of all primes. The number of primes which are less or equal $n$ is usually denoted $\pi(n)$. The size sequence of primes $\sigma(P) = (\pi(n)) = (1,2,3,3,4,4,5,5,5,5, \dots)$. The well-known fact about primes is that for $n>10$ 
$$\frac{n}{\log n} \leq \pi(n) \leq \frac{3n}{\log n}$$

\begin{enumerate}
\item For any $k \in \mathbb N$ we denote a set of $k$-multiples as $M_k = \{n \in \mathbb N; (\exists m \in \mathbb N)(n = m\cdot k)\}$. Its size sequence is $\sigma(M_k) = (\underbrace{0, \dots 0}_{k-1}, \underbrace{1, \dots 1}_k, \underbrace{2 \dots 2}_k, \dots \dots )$. So
$$\frac{n - 1}{k} \leq \sigma_n (M_k) \leq \frac{n}{k}.$$
Because $$(\forall k)(\exists m)(\forall n > m)(\frac{k}{n - 1} < \frac{\log n}{3n}),$$ there is 
$$(\forall k)(\exists m)(\forall n > m)(\pi(n) \leq \frac{3n}{\log n} < \frac{n - 1}{k} < \sigma_n (M_k)). $$
Consequently
$$\sigma(P) <_\mathcal F \sigma(M_k)$$

\item We denote the set of the second powers of natural numbers as $S = \{n \in \mathbb N; (\exists m \in \mathbb N)(n = m^2)\}$, $S = \{1, 4, 9, 16, \dots \}$. Its size sequence is $\sigma(S) = (1, 1, 1, 2, 2, 2, 2, 2, 3, 3 \dots )$ and it is valid
$$\sqrt{n-1} \leq \sigma_n(S) \leq \sqrt{n}$$ 
Because $$\sqrt{n} < \frac{n}{\log n},$$ there is 
$$\sigma(S) <_\mathcal F \sigma(P).$$

For the set of the k-th powers $S_k = \{n \in \mathbb N; (\exists m \in \mathbb N)(n = m^k)\}= \{1, 2^k, 3^k, \dots\}$ there is valid 
$$\sqrt[k]{n-1} \leq \sigma_n(S_k) \leq \sqrt[k]{n}.$$ 
Because $\sqrt[k]{n} \leq \sqrt{n}$ there is also 
$$\sigma(S_k) <_\mathcal F \sigma(P)$$

\end{enumerate}
\end{proof}

\subsection{Integers}
Integers $\mathbb Z$ can be described $$\mathbb Z = \mathbb N \cup \mathbb N^{-} \cup \{0\}.$$ 
Negative whole numbers $\mathbb N^{-}$ have the same canonical arrangement as natural numbers. %They have the same \emph{determination ground}. 
The arrangement of a finite set is unimportant, see Proposition \ref{finite}, we can define $l(0) = 1.$

\begin{definition} We define the \emph{labelling} of integers $\mathbb Z$ as the function $l_{Z}: \mathbb Z \longrightarrow \mathbb N$ 
$$l_\mathbb Z(x) = |x|\text{ if } x \neq 0 \text{, and } l_\mathbb Z(0) = 1 $$
\end{definition}

\begin{proposition} The labelling function $l_\mathbb Z$ is canonical. For every natural number $n \in \mathbb N \subseteq \mathbb Z$ $$l_\mathbb Z(n) = l(n).$$ 
We can write $l(x)$ instead of $l_\mathbb Z(x)$ for $x \in \mathbb Z$.\end{proposition}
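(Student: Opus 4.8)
\noindent\emph{Proof proposal.} The plan is to verify directly the two clauses in the definition of a canonical labelling for the function $l_\mathbb Z$, using the disjoint decomposition $\mathbb Z = \mathbb N \cup \mathbb N^{-} \cup \{0\}$ recorded just before the statement. Each of the three pieces has already been assigned a canonical arrangement: $\mathbb N$ by the rule $l(n)=n$, the negative whole numbers $\mathbb N^{-}$ by the same arrangement as $\mathbb N$ (so that $l(-n)=n$), and the singleton $\{0\}$ whose arrangement is immaterial by Proposition \ref{finite}. The first clause then reduces to checking that $l_\mathbb Z$ coincides with each of these on its respective piece; the function is well defined on all of $\mathbb Z$ by the trivial case split on $x\neq 0$ and $x=0$.

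First I would discharge the compatibility clause. Since the canonically arranged sets constructed so far are $\mathbb N$ together with its subsets, the set $\mathbb N^{-}$, and finite sets, it suffices to inspect $l_\mathbb Z$ on the three constituent pieces of $\mathbb Z$. For $x\in\mathbb N$ we have $x>0$, hence $l_\mathbb Z(x)=|x|=x=l(x)$; this simultaneously establishes the displayed identity $l_\mathbb Z(n)=l(n)$ of the proposition. For $x=-n\in\mathbb N^{-}$ with $n\in\mathbb N$ we have $l_\mathbb Z(x)=|-n|=n=l(-n)$, matching the canonical labelling of $\mathbb N^{-}$. For $x=0$ the value $l_\mathbb Z(0)=1$ is admissible because $\{0\}$ is finite, and Proposition \ref{finite} guarantees that its size sequence satisfies $\sigma(\{0\})=_\mathcal F 1$ irrespective of the label chosen. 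I would also note in passing that $l_\mathbb Z$ is finite-to-one — each fibre has at most three elements — so that $\mathbb Z$ is indeed calculable under this labelling.

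The second clause, that the arrangement correspond to the determining ground, is the only genuinely delicate point, since, as the authors remark after the definition, this condition is not formalised. Here I would argue informally that the integers are generated symmetrically from $0$ outward: the positive and negative integers of a given absolute value arise together and at the same stage, with $0$ as the natural starting point. The rule $l_\mathbb Z(x)=|x|$ is precisely the labelling that respects this symmetric mode of formation, and it is in any case forced on the positive part by the already-fixed labelling of $\mathbb N$. I expect this to be the main obstacle only in the sense that it cannot be discharged by calculation; once the symmetry is accepted as the determining ground, the formal verification above completes the argument, and writing $l(x)$ for $l_\mathbb Z(x)$ on $\mathbb Z$ is then unambiguous.
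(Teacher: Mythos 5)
Your verification is correct and follows essentially the same route as the paper, which gives no formal proof beyond the two remarks immediately preceding the proposition: that $\mathbb N^{-}$ carries the same canonical arrangement as $\mathbb N$ and that the arrangement of the finite set $\{0\}$ is immaterial. Your explicit case split on $\mathbb N$, $\mathbb N^{-}$ and $\{0\}$, together with the acknowledgement that the determining-ground clause can only be argued informally, simply spells out what the paper treats as evident.
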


\begin{itemize}
\item The canonical arrangement of $\mathbb Z$: $A_1 = \{0, 1, -1\}, A_n = \{n, -n\}$ for $n \neq 1$.
\item The characteristic sequence $\chi(\mathbb Z) = (3, 2, 2, 2, \dots)$.
\item The size sequence $\sigma(\mathbb Z) = \sigma(\mathbb N) + \sigma(\mathbb N^{-}) + \sigma(\{0\}) = 2 \alpha + 1$. 
\item $\sigma(\mathbb Z) = (1,2,3,4,5, \dots) + (1,2,3,4,5, \dots) + (1,1,1,1,1, \dots) = (2n+1) = (3, 5, 7, 9, 11, \dots)$.
\end{itemize}

\subsection{Cartesian product}

We arrange the Cartesian product of two computable sets $A, B$, $A = \bigcup\{A_n, n \in \mathbb N\}, B = \bigcup\{B_n, n \in \mathbb N\}$ as a union of \enquote{frames}. The $n-$th \enquote{frame} $(A \times B)_n$ contains finitely many elements. 
$$(A \times B)_n = \bigcup\{A_i \times B_j, n = \max\{i,j\}\}.$$ 
\enquote{Frames} are borders of \enquote{squares}, the n-th \enquote{frame} is a border of the n-the \enquote{square}. A characteristic sequence is determined by numbers of elements in \enquote{frames}, a size sequence by numbers of elements in \enquote{squares}. 
\begin{center}
\catcode`\-=12
\begin{tabular}{c||c c c c c c c c c c}
& $A_1$ & $A_2$ & $A_3$ & $A_4$ & $\dots$\\
\hline
\hline
$B_1$ & \ $A_1\times B_1$ \.& \ $A_2\times B_1$ \. & \ $A_3\times B_1$ \. & \ $A_4\times B_1$ \. & \dots \\
\cline{2-2}
$B_2$ & $A_1\times B_2$ & \ $A_2\times B_2$ \.& \ $A_3\times B_2$ \. & \ $A_4\times B_2$ \. & \dots \\
\cline{2-3}
$B_3$ & $A_1\times B_3$ & $A_2\times B_3$ & \ $A_3\times B_3$ \. & \ $A_3\times B_3$ \. & \dots \\
\cline{2-4}
$B_4$ & $A_1\times B_4$ & $A_2\times B_4$ & $A_3\times B_4$ & $A_4\times B_4$ \ \. & \dots\\
\cline{2-5}
$\dots$ &\dots &\dots & \dots &\dots & \dots \\
\end{tabular}
\end{center}
\begin{definition} 
\item Let $A, B$ be two computable sets. The \emph{labelling} of $A \times B$ is the function $l_{A \times B}: A \times B \longrightarrow \mathbb N$ such that for all $[x, y] \in A \times B$ $$l_{A \times B}([x, y]) = \max\{l_A(x),l_B(y)\}$$ 

\end{definition}

\begin{theorem}\label{Cartesian} Let $A, B$ be two calculable sets. 
\begin{enumerate}

\item The Cartesian product $A \times B$ is calculable.
\item If $A, B$ are canonically arranged then their Cartesian product is canonically arranged too. It holds
$$l([x, y]) = \max\{l(x),l(y)\}$$ 
\item If $\sigma(A), \sigma(B)$ are size sequences of $A,B$ then the size sequence of $A \times B$ is $$\sigma(A \times B) = \sigma(A) \cdot \sigma(B).$$
\item If $C \subseteq A \times B$ then its size sequence $\sigma(C)$ 
$$\sigma_n(C) = |\{[x,y] \in C \wedge l([x,y]) \leq n\}|.$$
\end{enumerate}
\end{theorem}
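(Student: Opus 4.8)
The plan is to establish the four claims in turn, treating the frame decomposition $(A\times B)_n = \bigcup\{A_i\times B_j : \max\{i,j\}=n\}$ together with the labelling $l_{A\times B}([x,y]) = \max\{l_A(x),l_B(y)\}$ as the arrangement that exhibits $A\times B$ as a calculable set. For claim (1) I would first check that each frame is finite: the index pairs $(i,j)$ with $\max\{i,j\}=n$ are exactly $(n,1),\dots,(n,n),(1,n),\dots,(n-1,n)$, so the frame is a union of $2n-1$ sets $A_i\times B_j$, each finite as a product of two finite sets, hence finite. Distinct frames are disjoint because $\max\{l_A(x),l_B(y)\}$ assigns a single value to each $[x,y]$, and every pair lies in some frame since its components lie in some $A_i$ and some $B_j$. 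Thus $A\times B=\bigcup_n(A\times B)_n$ is a union of disjoint finite sets indexed by $\mathbb N$, i.e. calculable.

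The computational heart is claim (3), and here I would exploit the decoupling property of the maximum. Using the relation $\sigma_n(X)=|\{z : l_X(z)\leq n\}|$ established for the labelling of a calculable set, applied to $A\times B$, we have $\sigma_n(A\times B)=|\{[x,y] : l([x,y])\leq n\}|$. Now $\max\{l_A(x),l_B(y)\}\leq n$ holds if and only if both $l_A(x)\leq n$ and $l_B(y)\leq n$, so the counted set is exactly the Cartesian \enquote{square}
$$\{x : l_A(x)\leq n\}\times\{y : l_B(y)\leq n\}.$$
Counting this product and substituting $\sigma_n(A)=|\{x : l_A(x)\leq n\}|$ and $\sigma_n(B)=|\{y : l_B(y)\leq n\}|$ yields $\sigma_n(A\times B)=\sigma_n(A)\cdot\sigma_n(B)$ for every $n$. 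Since multiplication of size sequences is componentwise by Definition~\ref{plus}, this is precisely $\sigma(A\times B)=\sigma(A)\cdot\sigma(B)$. Claim (4) follows the same pattern: a subset $C\subseteq A\times B$ inherits its labelling by restriction, so the same identity gives $\sigma_n(C)=|\{[x,y]\in C : l([x,y])\leq n\}|$.

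For claim (2) I would verify that the $\max$-labelling meets the conditions of a canonical labelling: its value at $[x,y]$ depends only on the canonical labels $l_A(x)$ and $l_B(y)$, so whenever $A\times B$ meets another canonically arranged set in a common element the labels agree, giving the consistency condition, while the frame decomposition is the natural \enquote{way of being formed} of a product and so matches its determining ground. I expect this to be the main obstacle, not because the algebra is difficult but because the second clause in the definition of canonical labelling (\enquote{corresponds to the determining ground}) is deliberately informal; the honest work is to argue that the $\max$-arrangement is \emph{forced} once the canonical arrangements of $A$ and $B$ are fixed, so that no competing canonical arrangement of $A\times B$ can disagree with it. The remaining claims (1), (3) and (4) are then routine consequences of the decoupling identity.
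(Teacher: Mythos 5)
Your proposal is correct and follows essentially the same route as the paper: the same frame decomposition $(A\times B)_n=\bigcup\{A_i\times B_j:\max\{i,j\}=n\}$ for calculability, and the same decoupling of the $\max$-condition, $\sigma_n(A\times B)=|\{[x,y]:l(x)\leq n\wedge l(y)\leq n\}|=\sigma_n(A)\cdot\sigma_n(B)$, for the product formula. You merely spell out the disjointness and finiteness checks in more detail and are more explicit than the paper about the unavoidable informality of the ``determining ground'' clause in claim (2).
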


\begin{proof}
\begin{enumerate}
\item $A \times B = \bigcup\{(A \times B)_n , n \in \mathbb N\}$, where $(A \times B)_n = \bigcup\{A_i \times B_j, n = \max\{i,j\}\}$ is a finite set. 
\item The size sequence of a canonically arranged set $A$ is the same as the arrangement of $A \times \{1\}$.
\item We wish to prove $\sigma_n(A \times B) = \sigma_n (A) \cdot \sigma_n (B)$. 

$\sigma_n(A \times B) = |\{[x,y] \in A \times B, \max\{l(x),l(y)\} \leq n\}| =$
$|\{[x,y] \in A \times B, l(x) \leq n \wedge l(y) \leq n\}| =$
\item $|\{x \in A, l(x) \leq n\}| \cdot |\{y \in B, l(y) \leq n\}| = \sigma_n (A) \cdot \sigma_n (B)$
\end{enumerate}

\end{proof}

\begin{example}

Let $\mathbb N \times \mathbb N$ be the set of pairs of natural numbers.

\begin{itemize} 
\item The canonical arrangement of $\mathbb N \times \mathbb N$: $A_1 = \{[1,1]\}, A_2 = \{[1,2], [2,2],[2,1]\}, \dots $
\item $\chi(\mathbb N \times \mathbb N) = (1, 3, 5, 7, 9, 11, 13, 15, 17, 19 \dots) = (2n-1)$
\item $\sigma(\mathbb N \times \mathbb N) = (1,4, 9, 16, 25, 36, 49, 64, 81, 100 \dots) = (n^2) = (n)^2 = \alpha^2.$
\end{itemize}
\end{example}

\begin{example} The size sequence of even numbers $E$ is $\sigma (E) = (0,1,1,2,2,3,3, \dots)$ and of odd numbers $O$ is $\sigma (O) = (1,1,2,2,3,3,4 \dots)$. Thus
\begin{itemize}
\item $\sigma(E \times O) = \sigma (E) \cdot \sigma (O) = (0, 1, 2, 4, 6, 9, 12, 16, 20, \dots)$
\item $\sigma(O \times E) = \sigma (O) \cdot \sigma (E) = (0, 1, 2, 4, 6, 9, 12, 16, 20, \dots)$
\item $\sigma(E \times E) = \sigma (E)^2 = (0, 1, 1, 4, 4, 9, 9, 16, 16, \dots)$
\item $\sigma(O \times O) = \sigma (O)^2 = (1, 1, 4, 4, 9, 9, 16, 16, 25, \dots)$ 
\item $\sigma(\mathbb N \times \mathbb N) = \sigma(E \times O) + \sigma(O \times E) + \sigma(E \times E) + \sigma(O \times O) = (1, 4, 9, 16, 25, 36, 49, 64, 81, \dots)$
\end{itemize}
\end{example}

\subsection{Rational numbers}

A canonical arrangement of positive rational numbers does not seem to be a problem at first sight. Let us start with the half-open unit interval of rationals $$I = (0,1]_\mathbb Q \subseteq \mathbb Q.$$ 
Every $x \in I$ can be expressed as a proper fraction $x = \frac{m}{n}$, i.e. a ratio of two coprime\footnote{Two numbers $m,n \in \mathbb N$ are \emph{coprime} if their only common divisor is $1$, i.e. $(\forall z \in \mathbb N)((\exists u \in \mathbb N)(\exists v \in \mathbb N)(m = z \cdot u \wedge n = z \cdot v) \Rightarrow z = 1$} natural numbers $m, n \in \mathbb N$ such that $m < n$. 
$$I = (0,1]_\mathbb Q \sim \{[m,n] \in \mathbb N \times \mathbb N; m, n \text{ are coprime and } m< n\} \cup [1,1]\}$$
\begin{center}

\catcode`\-=12
\begin{tabular}{c||c c c c c c c c c c}
& 1 & 2 & 3 & 4 & 5 & 6 & 7 & 8 & 9 & $\dots$\\
\hline
\hline
1 & \. 1 & \. 0 & \. 0 & \. 0 & \. 0 & \. 0 & \. 0 \. & \. 0 & \. 0 & \dots \\
\cline{2-2}
2 & 1 & \. 0 & \. 0 & \. 0 & \. 0 & \. 0 & \. 0 & \. 0 & \. 0 & \\
\cline{2-3}
3 & 1 & 1 & \. 0 & \. 0 & \. 0 & \. 0 & \. 0 & \. 0 & \. 0 & \\
\cline{2-4}
4 & 1 & 0 & 1 & 0\. & \.0 & \.0 & \.0 & \.0 & \.0 & \\
\cline{2-5}
5 & 1 & 1 & 1 & 1 &\. 0 &\. 0 & \.0 & \.0 & \.0 & \\
\cline{2-6}
6 & 1 & 0 & 0 & 0 & 1 &\. 0 & \.0 & \.0 & \.0 & \\
\cline{2-7}
7 & 1 & 1 & 1 & 1 & 1 & 1 & \.0 & \.0 & \.0 & \\
\cline{2-8}
8 & 1 & 0 & 1 & 0 & 1 & 0 & 1 & \.0 & \.0 & \\
\cline{2-9}
9 & 1 & 1 & 0 & 1 & 1 & 0 & 1 & 1 & \.0 & \\
\cline{2-10}
$\dots$ & & & & & & & & & & \dots \\
\end{tabular}
\end{center}

\begin{itemize}
\item The labelling function: 
$l_I({m\over n}) = n$, $l_I(1) = 1$. 
\item The arrangement $I_1 = \{1\}, I_2 = \{\frac{1}{2}\}, I_3 = \{\frac{1}{3}, \frac{2}{3}\} \dots $
\item The characteristic sequence $\chi(I) = (1,1,2,2,4,2, 6,4,6, \dots).$
\item The size sequence $\sigma(I) = (1,2,4,6,10,12,18,22,28, \dots) < \frac{\alpha^2 - \alpha}{2}$.
\end{itemize}

This arrangement of $I$ extends the canonical arrangement of integers and it depends on the way of being formed of proper fractions, we can consider it as \emph{canonical}. 

\begin{remark}
The n-th term of the characteristic sequence $\chi_n(I)$ is equal to the number of coprime numbers less than $n$ which is given by the \emph{Euler's totient function} $\varphi$
$$\chi_n(I) = \varphi(n).$$ 
There are several simple methods for computing Euler's function.\footnote{See for instance https://brilliant.org/wiki/eulers-totient-function/.} 
\end{remark}
$$\ast \ast \ast$$

The best way to express suitably positive rational numbers seems to be as mixed fractions\footnote{Positive rational numbers could also be expressed as fractions of natural numbers. Then they would be represented as pairs of coprime natural numbers which is a subset of $\mathbb N \times \mathbb N$. But it does not seem to be appropriate. Then the canonical arrangement would not be uniform and homogeneous. At the same time, the numbers are increasing in \enquote{depth}, most of them are proper fractions between 0 and 1,
and in the \enquote{distance}, where they are very sparse. Consequently, the size of rational numbers between 0 and 1 would be the same as the size of rational numbers greater than 1 and it cannot be true. 
$$\sigma(0,1)_\mathbb Q = \sigma(1, \infty)_\mathbb Q$$.}
Rational numbers are represented as a Cartesian product of $\mathbb N_0$\footnote{We denote by $N_0$ natural numbers including zero, i.e. $N_0 = \{0,1,2,3, \dots \}$.} and a unit interval $I$. 
$$\mathbb Q^+ = \mathbb N_0 \times I \sim \{p + {m \over n}, p \in N_0, m,n \in \mathbb N \wedge m< n \}$$
\begin{itemize} 
\item The labelling function: $l_\mathbb Q^+ (p + {m \over n}) = \max\{p, n\}$. 
\item The arrangement $\mathbb Q^+_1 = \{0,1\}, \mathbb Q^+_2 = \{{1\over 2}, 1{1 \over 2}, 2, 2{1\over 2}\}, \mathbb Q^+_3 = \{{1\over 3}, {2\over 3}, 1{1\over 3}, 1{2 \over 3}, 2{1\over 3}, 2{2\over 3}, 3, 3{1\over 3}, 3{1\over 2}, 3{2\over 3}\}, \dots$.
\item The size sequence 
$\sigma(\mathbb Q^+)= \sigma(\mathbb N_0) \cdot \sigma(I) < (\alpha + 1) \cdot \frac{\alpha^2 - \alpha}{2} < \frac{ \alpha^3 - \alpha}{2}.$ 

$ (2,3,4,5,6,7,8,9,10, \dots) \cdot(1,2,4,6,10,12,18,22,28, \dots) = (2,6,16,30,60,84,144,198, 280, \dots ) $
\item The characteristic sequence $\chi(\mathbb Q^+) = (2, 4, 10,14,30, \dots)$

\end{itemize}

This arrangement of rational numbers is uniform, at least in the sense that there is an equal number of elements in each unit interval. We can regard it as a canonical arrangement.

$$\ast\ast\ast$$

We express all rational numbers $\mathbb Q$ as the union of positive rational numbers, negative rational numbers and zero. $$\mathbb Q = \mathbb Q^+ \cup \mathbb Q^- \cup \{0\}$$

So their size sequence is 
$$\sigma(\mathbb Q) = 2\sigma(\mathbb Q^+) + 1 = (5, 13, 33, 61, 121, 169, 289, \dots) < \alpha^3 - \alpha$$

\subsection{Calculable union of calculable sets}
Similarly, we determine a characteristic sequence and a size sequence of a calculable union of calculable sets. We arrange their elements in segments that cover the entire set. The characteristic sequence indicates the number of elements in each segment, the size sequence the number of elements in each square.
\begin{definition} Let $A$ be the calculable union of calculable sets $A_n$ where $A_n = \bigcup\{A_{n j}, j \in \mathbb N \}$. 
$$A = \bigcup\{A_n, n \in \mathbb N\}$$
We define the \emph{labelling function} $l: A \longrightarrow \mathbb N$ 
$$l(x) = \max\{i, j; x \in A_{ij}\}$$
\end{definition}
\begin{theorem} 
A calculable union of calculable sets is calculable.
\end{theorem}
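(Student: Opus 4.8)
The plan is to check directly that the displayed labelling function $l$ satisfies the definition of a calculable set: it suffices to show that $l$ is finite-to-one, for then the pre-images $\tilde A_n = \{x \in A; l(x) = n\}$ are disjoint finite sets indexed by $\mathbb N$ whose union is $A$, which is exactly what the definition requires. This is the same pattern as the Cartesian product case (Theorem \ref{Cartesian}), where the $\max$-labelling was shown to be finite-to-one.

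First I would observe that $A$ is a doubly-indexed union of finite sets. Each constituent $A_i$ is calculable, so its arrangement $A_i = \bigcup_j A_{ij}$ expresses it as a union of disjoint finite sets; taking the union over $i$ gives $A = \bigcup\{A_{ij}; i,j \in \mathbb N\}$ with every $A_{ij}$ finite. By definition $l(x)$ is the largest index that occurs for $x$ among all the pairs $(i,j)$ with $x \in A_{ij}$.

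The key step is the inclusion
$$l^{-1}(n) \subseteq \bigcup\{A_{ij}; \max\{i,j\} = n\}.$$
If $l(x) = n$ then $n$ is the largest index attached to $x$, so $x$ lies in some $A_{ij}$ with $i = n$ or $j = n$, and since no larger index occurs we have $\max\{i,j\} = n$ for that pair. There are only $2n-1$ pairs $(i,j)$ with $\max\{i,j\} = n$, and each $A_{ij}$ is finite, so the right-hand side is a finite union of finite sets. Hence every $l^{-1}(n)$ is finite, $l$ is finite-to-one, and $A = \bigcup_n \tilde A_n$ is calculable.

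The one point needing care, and the main obstacle, is the well-definedness of $l$: the maximum in $l(x) = \max\{i,j; x \in A_{ij}\}$ must range over a bounded set, which fails if some $x$ sits in $A_{ij}$ for arbitrarily large indices. This is guaranteed whenever the arrangement is such that each element belongs to only finitely many of the $A_{ij}$ --- in particular when the outer union $\bigcup_i A_i$ is disjoint, so that $x$ lies in a single $A_{i_0}$ and there in a single $A_{i_0 j_0}$, giving the finite value $l(x) = \max\{i_0, j_0\}$. Under this reading of a \emph{calculable union} the maximum is always finite and the argument above carries through without change.
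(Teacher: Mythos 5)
Your proof follows essentially the same route as the paper: its one-line argument simply defines the frames $C_n = \bigcup\{A_{ij};\ \max\{i,j\}=n\}$ and observes that they are finite, which is exactly your decomposition read through the labelling function (finitely many pairs with $\max\{i,j\}=n$, each $A_{ij}$ finite). The only difference is your closing remark that $l(x)=\max\{i,j;\ x\in A_{ij}\}$ is well defined only if each element lies in finitely many of the $A_{ij}$ --- e.g.\ when the outer union is disjoint --- a correct and worthwhile caveat that the paper's proof passes over in silence.
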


\begin{proof} 
We define the new arrangement of $A$: 
$$A = \bigcup\{C_n, n \in \mathbb N\},$$
where $C_n = \bigcup \{A_{ij}, n = \max\{i, j\}\}$ are finite sets. Thus $A$ is calculable.
\end{proof}

\begin{example} \emph{Algebraic numbers} are roots of polynomials of n-th degrees for some $n$. They can be expressed as roots of polynomials with integer coefficients. 
$$a_0 + a_1 \cdot x + \dots a_n \cdot x^n.$$ 
There are at most $(2 \alpha + 1)^{n+1}$ polynomials of n-th degree. Every polynomial has maximally $n$ roots. Polynomials of n-degree have maximally 
$n \cdot (2 \alpha + 1)^{n+1}$ roots. Algebraic number are a union of polynomials of n-degree for any n. It is a calculable set. 
\end{example}

\section{Numerosity theory}\label{num}

\subsection{Comparison}

This theory of sizes of computable sets arose independently of numerosity theory NT. Nevertheless, NT was a nice surprise and awakening. Its goal is similar and leads to the same conclusions in many questions. We take the liberty of borrowing some terms to contribute to greater common comprehensibility. Some methods used in NT are also an inspiration for us.
However, we differ from NT in some ways. 
\begin{enumerate}
\item Our concept is based on Bolzano's theory of infinite quantities. The set size is expressed as the interpretation of an infinite Bolzano's series. In NT, basic notions are defined and not justified. 
\item While the size of a finite set does not depend on its arrangement, the arrangement of an infinite set is important for its size. That is why we put emphasis on the canonical arrangement that depends on a determination ground of a set. It is again a Bolzano's idea. 
\item We have shown how and why the size of a Cartesian product of two sets follows from 
the canonical arrangement of their Cartesian product. 
\item The size of rational numbers is examined and justified by their determination ground. However, NT claims:
\begin{quote} In particular, it seems there is no definitive way to decide whether $\mathfrak n_\alpha((0,1]_\mathbb Q) \geq \alpha$ or $\mathfrak n_\alpha((0,1]_\mathbb Q) \leq \alpha$. So, in absence of any reason to choose one of the two possibilities, we go for the simplest option $\mathfrak n_\alpha((0,1]_\mathbb Q) = \alpha$. (BD 2019, p. 291) 
\end{quote}
\item The greatest difference is that we do not use ultrafilters. Set sequences are factorized by Fr\'{e}chet filter. 
Consequently, our ordering of set sizes is only partial and not linear, see Section \ref{partial order}. Nevertheless, our results are unequivocal, while in NT some results are arbitrary. (Benci, Di Nasso, 2003). One can influence it to a certain extent. In (Benci, Di Nasso, 2019, p. 288 - 289), authors postulate that the infinite number $\alpha$ is a multiple of $k$ and it is a k-th power of some number for every $k \in \mathbb N$.%\footnote{Yaroslav Sereyev maybe could use NT as a model for his theory. The central position in it occupies a quantity called \emph{grossone}, which, in addition to being infinite, is divisible by every natural number. (Sergeyev 2017, p. 236) The consistency and conservatism of the grossone theory is controversial (Gutman, Katz, Kudryk, Kutateladze 2017). But Sergeyev refuses attempts to build his theory on a solid mathematical ground. He rather considers it as a methodological platform to describe a numeral system expressing infinities and infinitesimals and as a bridge over a gap between Mathematics and Physics (Sergeyev 2019, 155). Nevertheless, I find some Sergeyev's justification of his work reasonable and interesting.} 

\item NT has one more reason for a need of a linear ordering and so an ultrafilter. Numerosities of countable sets are used as a basis for the $\alpha$-calculus, a special kind of a non-standard analysis. 

\begin{quote} \dots because it is the very idea of numerosity that leads to Alpha-calculus. (Benci, Di Nasso 2019, p. 300) \end{quote}
Bolzano's theory can also be used as a basis for non-standard analysis. However, it is preferable to build on of Bolzano's infinite series of \emph{real numbers} rather than natural numbers. If we factorize them according to the Fr\'{e}chet filter, we get a non-Archimedean ring that is enough to implement the so-called cheap version of non-standard analysis. If we want an elegant full version, in which the transfer principle applies, an ultrafilter has to be used. (Trlifajov\'{a} 2018)

\item Last but not least. Our concept is simpler and more intuitive. All we need is a basic mathematical knowledge. 
Our intuition is based on the idea of how we would proceed if we were to count a large number of things. According to their \emph{determination ground}, we arrange them into smaller groups. The numbers of elements of the groups are written down gradually, it is a \emph{characteristic sequence}, and they are summed, we get \emph{size sequences}. It does not matter how finitely many elements are arranged. This justifies the use of \emph{Fr\'{e}chet filter.}
\end{enumerate}

\subsection{Parker's objections}\label{Parker}

Matthew Parker in his paper \emph{Set Size and the Part-Whole Principle} argues there can be no good theories of set size satisfying the Part-Whole principle, he calls them Euclidean theories. His criticism is mainly focused on the numerosity theory. The first part of the article deals with the general properties of Euclidean theories.
\begin{quote}
But our main question here is whether it is possible to have a really good Euclidean
theory of set size, and my answer is, no, not really - not if that means it must be strong,
general, well-motivated, and informative. I will argue that any Euclidean theory strong
and general enough to determine the sizes of certain simple, countably infinite sets must
incorporate thoroughly arbitrary choices. (Parker 2013, p. 590).
\end{quote}

The concept of size sequences is resistant against all these Parker's objection. It is motivated and based on Bolzano's theory. It extends naturally the notion of a size of finite sets. Set-sizes of simple countably infinite sets are uniquely determined. They are not arbitrary and do not involve unmotivated details. The concept provides an intuitive method for a comparison of set sizes.
\medskip

It is necessary to say that Parker assumes that an assignment determining the set size is a function from a class of objects $D$ to the \emph{linearly} ordered mathematical structure and an effective method to determine the value of a given object. The structure of non-decreasing sequences of natural numbers is not linearly ordered and consequently some set-sizes are not comparable. Nevertheless, Parker admits that sizes of some sets are incomparable. 

\begin{quote} An assignment is Euclidean on $D$ if PW applies to all proper subset/superset pairs in $D$. In that case, all such pairs have size relation, but this does not imply totality; we might for example have disjoint sets in $D$ that are not comparable at all. (Parker 2013, 593).
\end{quote} 
The second part of the paper concerns various types of transformations and rotations. Both of them are one-to-one correspondences which preserve some other property. They usually map an infinite set on its own proper subset. Parker demonstrates, that a Euclidean assignment do not preserve transformations and rotations. It violates a principle that if $T$ is a transformation 
then the size of $A$ should be equal to the size of $T(A)$. 
$$\sigma(A) = \sigma(T(A))$$
Translations are transformations which preserve a distance. For instance, $T(n) = n+1$ is the translation by one unit. (Parker 2013, 595). The translation of $\mathbb N_0 = \{0, 1, 2, \dots\}$ is $T(\mathbb N_0) = \mathbb N = \{1, 2, 3 \dots\}$. So it should be $ \sigma(\mathbb N_0) = \sigma(T(\mathbb N_0))$. 
But at the same time $\mathbb N_0 = \mathbb N \cup \{0\}$ and consequently
$\sigma(\mathbb N_0) = \sigma(\mathbb N) + 1$.
We must protest against this request. It is a well-known fact that infinite sets are NOT invariant to one-to-one correspondence. Already the Dedekind's definition of an infinite set is that it can be put in one-to-one correspondence with a proper subset of itself. We return again to the Galileo's paradox. 
If we want to avoid the contradiction, we must give up the request of a transformation and rotation invariance principles of a size of infinite sets.

On the other side we admit that we have not stated precisely a condition for an extension of a canonical arrangement. We defined a slightly vague condition that it depends on the \emph{determination ground} of a set. If we considered two sets such that one of them did not have a canonical arrangement we would not have an exact criterion for equality of their sizes. Parker's translation, which is the same as Bolzano's isometry of two multitudes (see Section \ref{PW}), is not a sufficient criterion generally. 

\section{Epilogue}

In a letter to Dedekind from July 28, 1899, Georg Cantor, after the first paradoxes had appeared in the set theory, suggested to divide sets into consistent and inconsistent.
\begin{quote} For on the one hand a multiplicity can be such that the assumption that \emph{all} its elements \enquote{are together} leads to a contradiction, so that it is impossible to conceive of the multiplicity as a unity, as \enquote{one finished thing}. Such multiplicities I call \emph{absolutely infinite} or \emph{inconsistent multiplicities}. 
When on the other hand the totality of elements of a multiplicity can be thought without contradiction as \enquote{being together}, so that their collection into \enquote{one thing} is possible I call it a \emph{consistent multiplicity} or a \emph{set}.
(Halett 1984, p. 166).
\end{quote}
Cantor presents as inconsistent multiplicities a totality of everything thinkable, a class of all ordinal numbers $\Omega$ or of all cardinal numbers. Now, they are called proper classes in ZF. In the following letter to Dedekind from August 28, 1899, Cantor returns to the question and asks how do we know that the set of natural number represented by $\aleph_0$ and other cardinal numbers are actually \enquote{consistent multiplicities}. 

\begin{quote}
Is it not conceivable that these multiplicities are already \enquote{inconsistent} but that the contradiction which results from the assumption that \enquote{all their elements can be taken together} has not been yet noticed? \end{quote}
Cantor admits he has no other reason than the similarity with finite sets, whose consistency is a simple, undemonstrable truth. 
\begin{quote}
And likewise the \enquote{consistency} of multiplicities to which I assign the alephs as cardinal numbers is \enquote{the axiom of the extended or the transfinite arithmetic}. (Halett 1984, p. 175) \end{quote}

The existence of a \emph{set} of natural numbers is guaranteed by the axiom of infinity in ZF or its analogy in other set-theoretic systems. But it is more our belief than the proof that the collection of natural numbers is really \enquote{one finished thing}. 
$$\ast\ast\ast$$

Cardinal numbers, ordinal numbers and size sequences express different views on a size and an ordering of infinite sets. For finite sets, these terms match each other because they all correspond to number of elements. They differ for infinite sets. However, they are not mutually exclusive. They would have a good meaning, even if we had doubts that the natural numbers were \enquote{consistent multiplicities}.

\emph{Cardinal numbers} or \emph{powers} denote sizes of sets neglecting one-to-one correspondence. It is typical for infinite sets they are in one-to-one correspondence with some of their proper subsets and also with some of their proper supersets. Consequently, PW cannot hold. 

\emph{Ordinal numbers} represent types of well-orderings. They designate a way of ordering of a set up to isomorphism. It is well-known that every infinite set can be well-ordered by infinitely many ways which can be described by infinitely many ordinal numbers. Of course, PW cannot hold too. 

The notion of \emph{size sequences} may be the closest to the notion of a number of elements of finite sets. Sizes of sets are firmly given, denoted by number sequences. The part-whole principle is valid. The method is constructive and the results are unequivocal. We do not use an ultrafilter, consequently the ordering of set sizes is only partial and not linear. 

We should not suppose that natural numbers create a set, it suffices to suppose that we can consistently work with infinite quantities, quantities which are greater than all finite numbers. That is what Bolzano wanted to demonstrate in his \emph{Paradoxes of the Infinite}.


\begin{thebibliography}{}
\bibitem{} Bellomo, A., (2016). \emph{Theories of Size for Infinite Collections}, MSc. in Logic, Universiteit van Amsterdam.
\bibitem{} Benci, V., Di Nasso, M. (2003). Numerosities of Labelled Sets: a New Way of Counting. \emph{Advances in Mathematics} 173, 50-67.
\bibitem{} Benci, V., Di Nasso, M., (2019). \emph{How to Measure the Infinite, Mathematics with Infinite and Infinitesimal Numbers}, World Scientific, 2019.
\bibitem{} Berg, J., (1962). \emph{Bolzano's Logic}, Almqvist and Wiksell, Stockholm. 
\bibitem{} Berg, J. (1992). \emph{Ontology without ultrafilters and possible worlds: An examination of Bolzanos ontology}, Academia Verlag, St Augustin. 
\bibitem {} Bolzano, B. (1851/2004). \emph{Paradoxien des Unendlichen}, CH Reclam, Leipzig. English translation \emph{Paradoxes of the Infinite} in (Russ 2004).
\bibitem{} Bolzano, B., (1837/2014). \emph{Theory of Science}, English translation Paul Rusnock and George Rolf, Oxford University Press. 
\bibitem {} Cantor, G., (1883/1976). \emph{Grundlagen einer allgemeinen Mannigfaltigkeitslehre}. English translation Foundations of a General Theory of Manifolds by Georg Cantor, \emph{The Campaigner}, Journal of the National Caucus of Labor Committees, 9(1-2). 69 - 97. 
\bibitem{} Cantor, Georg (1895/1915). \emph{Contributions to the Founding of the Theory of Transfinite Numbers}, English translation by Philip Jourdain, New York, Dover Publication, Inc. 1915 
\bibitem{} Dauben, J.W. (1990). \emph{Georg Cantor His Mathematics and Philosophy of the infinite}, Princeton University Press.
\bibitem{} Halett, M., (1984). \emph{Cantor set theory and limitation of size}, Clarendon Press, Oxford. 
\bibitem {} Heck, R. (2000). Cardinality, counting and Equinumerosity, \emph{Notre Dame Journal of Formal Logic}, 41, 187-209.
\bibitem{} Galilei, G., (1638/1914). \emph{Dialogues Concerning Two New Sciences}, English translation by Henry Crew and Alfonso da Savie, New York, Macmillan, 1914, http://files.libertyfund.org/files/753/Galileo 0416 EBk v6.0.pdf

\bibitem{} Gutman, A.E., Katz M.G. Kudryk, T.S., Kutateladze S.S. (2017). The Mathematical
Intelligencer flunks the Olympics. \emph{Foundations of Science}, 22(3), 539–555.

\bibitem {} Mancosu, P., (2009). Measuring the Size of Infinite Collections of Natural Numbers: Was Cantor's Set Theory Inevitable?, \emph{The Review of Symbolic Logic}, 2(4), 612-646. 

\bibitem{} Parker, M.W., (2013). Set-size and the Part-Whole Principle. \emph{The Review of Symbolic Logic} 6(4), p. 589 - 612.
\bibitem {} Rusnock P., (2000). \emph{Bolzano's Philosophy and the Emergence of Modern Mathematics}, Rodopi. 
\bibitem {} Rusnock, P., (2013). On Bolzano's Concept of Sum, \emph{History and Philosophy of Logic}, 34(2), 155-169. 
\bibitem {} Russ, S., (2004). \emph{The Mathematical Works of Bernard Bolzano}. Oxford University Press, Oxford.
\bibitem {} Russ, S., Trlifajov\'{a}, K., (2016). Bolzano's measurable numbers: are they real? In \emph{Research in History and Philosophy of Mathematics}. Basel, Birkhäuser, 39-56. 

\bibitem {} \v{S}ebest\'{i}k, J., (1992). \emph{Logique et math\'{e}matique chez Bernard Bolzano}. Vrin, Paris.

\bibitem{} Sergeyev, Y.D., (2017). Numerical infinities and infinitesimals: Methodology, applications,
and repercussions on two Hilbert problems. \emph{EMS Surveys in Mathematical
Sciences}, 4(2), 219–320.

\bibitem {} Sergeyev, Y.D., (2019). Independence of the grossone-based infinity methodology from non-standard analysis and
comments upon logical fallacies in some texts asserting the opposite, \emph{Foundations of Science}, 24(1), 153-170. 
\bibitem{} Simons, P., (1998). Bolzano on Collections. In \emph{Bolzano and Analytic Philosophy}, ed. by W. K\H{u}nne, M. Siebel and M. Textor, Brill, Rodopi.
\bibitem{} Simons, P., (2005). Against Set Theory. In \emph{Proceedings of the 2004 Wittgenstein Symposium}. ed. Marek, J., Reicher M., Hpt\&\H{O}bv., Vienna , 143--152. 
\bibitem {} Tao, T., (2012). \emph{A cheap version of non-standard analysis},

https://terrytao.wordpress.com/2012/04/02/a-cheap-version-of-nonstandard-analysis/. 
\bibitem {} Trlifajov\'{a}, K., (2018). Bolzano's Infinite Quantities, \emph{Foundations of Science} 23(4), 681–704. 
\bibitem {} Vop\v{e}nka, P., (1979). \emph{Mathematics in the Alternative Set Theory}, Teubner Verlagsgesellschaft,
Leipzig.
\end{thebibliography}
\end{document}